\newtheorem{theorem}{Theorem}
\newtheorem{lemma}[theorem]{Lemma}
\newtheorem{claim}{Claim}
\newtheorem{observation}{Observation}
\newcommand{\lp}{\left (}
\newcommand{\rp}{\right )}
\DeclarePairedDelimiter{\floor}{\lfloor}{\rfloor}
\title{Counting $k$-cycles in $5$-connected planar triangulations}
\author{
Gyaneshwar Agrahari \thanks{Department of Mathematics, Louisiana State University, Baton Rouge, LA, 70803 ({\tt gagrah1@lsu.edu}).}
\and
Xiaonan Liu \thanks{Department of Mathematics, Vanderbilt University, Nashville, TN, 37240
({\tt xiaonan.liu@vanderbilt.edu}).}
\and
Zhiyu Wang \thanks{Department of Mathematics, Louisiana State University, Baton Rouge, LA, 70803
({\tt zhiyuw@lsu.edu}). This author was supported in part by LA Board of Regents grant LEQSF(2024-27)-RD-A-16.}
}
\begin{document}

\maketitle

\begin{abstract}
We show that every $n$-vertex $5$-connected  planar triangulation has at most $9n-50$ many cycles of length $5$ for all $n\ge 20$ and this upper bound is tight.  We also show that for every $k\geq 6$, there exists some constant $C(k)$ such that for sufficiently large $n$, every  $n$-vertex $5$-connected planar graph has at most $C(k) \cdot n^{\lfloor k/3 \rfloor}$ many cycles of length $k$. This upper bound is asymptotically tight for all $k\geq 6$.
\end{abstract}

\section{Introduction}\label{sec:intro}

Enumerating cycles in a graph has a long history and has considerable applications in, for example, coding theory~\cite{AACST2013, AACST2013b, AACST2015}. In this paper, we focus on enumerating cycles in $5$-connected planar triangulations. A graph G is \textit{$k$-connected} if it has more than $k$ vertices and remains connected when fewer than $k$ vertices are removed. A \textit{planar triangulation} is an edge-maximal plane graph with at least three vertices, i.e., every face is bounded by a triangle. A cycle $C$ in a graph $G$ is said to be \textit{separating} if the graph obtained from $G$ by deleting vertices in $C$ is not connected.

The study on the existence and enumeration of Hamiltonian cycles (i.e., spanning cycles) in planar graphs has a long history~\cite{Whitney1931, Moon-Moser1963, Tutte1956, HST1979}, dating back to Whitney \cite{Whitney1931} who showed in 1931 that every $4$-connected planar triangulation has a Hamiltonian cycle. In 1979, Hakimi, Schmeichel, and Thomassen~\cite{HST1979} conjectured that every  $n$-vertex $4$-connected planar triangulation $G$ has at least $2(n-2)(n-4)$ Hamiltonian cycles, which was recently confirmed asymptotically by Liu, Wang, and Yu~\cite{Liu-Wang-Yu2021}. See also~\cite{BSC2018, Lo2020, Liu-Yu2021, LQ2021, Liu-Wang-Yu2021} for partial progress on this problem. For $5$-connected planar triangulations, 
the number of Hamiltonian cycles is drastically different. In particular, Alahmadi, Aldred, and Thomassen \cite{AAT2020} showed that every  $n$-vertex $5$-connected planar triangulation has $2^{\Omega (n)}$ Hamiltonian cycles, in contrast to the asymptotically tight quadratic lower bound when the connectivity is $4$. See~\cite{BHT1999, Liu-Yu2021} for related results. For more results on the minimum number of cycles of various length in planar triangulations, see \cite{LZ2025} and the references within.

There have also been extensive studies on enumerating cycles of fixed length in planar graphs. Let $N(H,n)$ denote the maximum number of copies of $H$ in an $n$-vertex planar graph. Alon and Caro~\cite{AC1984} determined $N(H,n)$ when $H$ is either a complete bipartite graph or a triangulation without separating triangles. In 1979, Hakimi and Schmeichel~\cite{HS1979} initiated the study on the maximum number of $k$-cycles in an $n$-vertex planar graph. In particular, they~\cite{HS1979} showed that $N(C_3,n) = 3n-8$, $N(C_4, n) = \frac{1}{2}(n^2 + 3n-22)$ and $N(C_k, n) = \Theta(n^{\floor{k/2}})$. Recently, answering a conjecture of Hakimi and Schmeichel~\cite{HS1979}, Gy\H{o}ri, Paulos, Salia, Tompkins and Zamora~\cite{GPSTZ} showed that $N(C_5, n) = 2n^2-10n+12$ (except for $n\in \{5,7\}$). Cox and Martin~\cite{Cox-Martin2022, Cox-Martin2023} further determined the exact asymptotics of $N(C_{2k},n)$ for $k\in \{3,4,5,6\}$. Confirming a conjecture of Cox and Martin~\cite{Cox-Martin2022}, Lv, Gy\H{o}ri, He, Salia, Tompkins and Zhu showed~\cite{Lv2024} that $N(C_{2k},n) = \frac{n^k}{k^k}+ o(n^k)$ for all $k\geq 3$. More recently, Heath, Martin and Wells~\cite{HMW2025} showed that $N(C_{2k+1},n)\leq 3k\lp \frac{n}{k}\rp^k + O(n^{k-1/5})$ for all $k\geq 5$, which is tight up to a factor of $3/2$. For general $H$, Wormald~\cite{Wormald1986} and later Eppstein~\cite{Eppstein1993} independently showed that for any $3$-connected graph $H$, $N(H,n) = \Theta(n)$. More results on generalized planar Tur\'an number of different graphs can be found in \cite{GPSTZ2}. 
The order of magnitude of $N(H,n)$ for an arbitrary graph $H$ was settled by Huynh, Joret and Wood~\cite{HJW2022}, and extended to more general settings when the host graphs are other classes of sparse graphs (see, e.g., \cite{Huynh-Wood2022, Liu2021+}).

In this paper, inspired by the drastically different behavior of the number of (Hamiltonian) cycles in planar graphs with different connectivities, we study the maximum number of $k$-cycles in $5$-connected planar graphs. Observe that it suffices to consider this problem in an edge-maximal planar graph, i.e., a planar triangulation. Morever, since an $n$-vertex planar triangulation has at most $3n-6$ edges, it can have connectivity at most $5$. 

For $\kappa\in \{3,4,5\}$ and a fixed graph $H$, let $N_{\kappa}(H,n)$ denote the maximum number of copies of $H$ in an $n$-vertex planar triangulation with connectivity $\kappa$.
For $3$-cycles, recall that Hakimi and Schmeichel~\cite{HS1979} showed $N(C_3,n) = 3n-8$. Since $4$-connected triangulations have no separating triangles, this implies that the number of $C_3$s in an $n$-vertex $4$-connected planar graph is at most $2n-4$, which is attained by any $n$-vertex planar triangulation. For $4$-cycles, Hakimi and Schmeichel~\cite{HS1979} showed that $N(C_4, n) = \frac{1}{2}(n^2 + 3n-22)$ for $n\geq 5$ and $N_4(C_4, n) = (n^2-n-2)/2$ for $n\geq 7$. Since a $5$-connected planar triangulation has no separating $4$-cycles, it follows that $N_5(C_4,n)$ is exactly the number of copies of $K_4-e$ in an  $n$-vertex $5$-connected planar triangulation, which is $3n-6$. For $5$-cycles, recall that Gy\H{o}ri, Paulos, Salia, Tompkins and Zamora~\cite{GPSTZ} showed that $N(C_5,n) = 2n^2-10n+12$ (for $n\notin\{5,7\}$), and the extremal graph is the double wheel, which is also $4$-connected (but not $5$-connected). This implies that $N_4(C_5,n) = 2n^2-10n+12$. The value of $N_5(C_5,n)$, even the order of asymptotics, was not known.

Our first result concerns the maximum number of $5$-cycles in a $5$-connected planar triangulation. In particular, we determine the exact value of $N_5(C_5,n)$ and also characterize the unique extremal graph.

\begin{theorem}\label{thm:5-cycle-5-connected}
For $n\geq 20$, every $n$-vertex $5$-connected planar triangulation has at most $9n-50$ many $5$-cycles, and the upper bound is tight. The unique extremal graph attaining the upper bound is the graph $D_1$ when $n$ is even and $D_2$ when $n$ is odd (see Figure \ref{fig:extremal}).
\end{theorem}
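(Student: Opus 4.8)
The plan is to count $5$-cycles by localizing them with respect to the edges or vertices they use. Since a $5$-connected planar triangulation has no separating triangles and no separating $4$-cycles, every $5$-cycle $C$ either bounds a face region containing few vertices or is separating; but more importantly, a $5$-cycle in a triangulation has a very constrained local structure. I would first establish a \emph{local counting lemma}: for each edge $uv$ of the triangulation $G$, bound the number of $5$-cycles through $uv$ in terms of the degrees $d(u), d(v)$ and the structure of the neighborhoods $N(u), N(v)$. A $5$-cycle through $uv$ has the form $u v x y z u$, and in a triangulation the chords forced by triangulating faces severely limit the choices; the key point is that $5$-connectivity (hence minimum degree $5$ and no small separating cycles) prevents the degrees from concentrating, so $\sum_{uv \in E(G)} (\text{\# of }5\text{-cycles through }uv)$ is $O(n)$ with an explicit constant. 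Dividing by $5$ (each $5$-cycle is counted once per edge, i.e.\ $5$ times) gives the $9n - c$ shape.

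Next I would pin down the exact constant and the additive term. For this I would do a discharging-style argument on the planar embedding: assign to each vertex a charge equal to (its contribution to the $5$-cycle count), use Euler's formula $\sum_v (d(v) - 6) = -12$ together with $\delta(G) \ge 5$, and redistribute so that vertices of degree exactly $5$ — which must be "spread out" by $5$-connectivity — carry the bulk of the count. The extremal configuration should force almost all vertices to have degree $5$ or $6$ with a rigid pattern, which is exactly what the graphs $D_1, D_2$ in Figure~\ref{fig:extremal} realize. To nail the $-50$ (rather than a weaker $-O(1)$), I would carefully account for the low-degree vertices near the "poles" of the $D_i$ construction and show any deviation from that pattern strictly loses at least one $5$-cycle, which simultaneously gives uniqueness of the extremal graph. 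The requirement $n \ge 20$ presumably enters to ensure the two polar regions do not interact and the generic "middle" structure has room to exist.

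The main obstacle I expect is the exact-constant / uniqueness part, not the asymptotic bound. Getting $9n - 50$ on the nose requires a fully tight local analysis: one must enumerate, up to symmetry, all ways a $5$-cycle can sit around a degree-$5$, degree-$6$, or higher-degree vertex in a $5$-connected triangulation, and verify the discharging inequalities are tight \emph{only} for the $D_i$ pattern. This is where routine-looking casework can hide sign errors, so I would organize it by the "link" structure (the cyclic sequence of face-triangles) around each vertex and use the no-separating-triangle / no-separating-$4$-cycle conditions as the main pruning tools. A secondary difficulty is handling $5$-cycles that are \emph{separating}: these are not bounded by a small region, so the edge-localization must be supplemented by a direct argument (again via $5$-connectivity) that there are only $O(n)$ of them and they do not contribute to the extremal count. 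Once the local lemma and the discharging bookkeeping are in place, the tightness direction is just a direct count of $5$-cycles in $D_1$ and $D_2$.
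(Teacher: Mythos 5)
There is a genuine gap, and it starts with a false premise. Your edge-localization plan rests on the claim that $5$-connectivity ``prevents the degrees from concentrating,'' but the extremal graphs $D_1,D_2$ themselves have two poles of degree $\Theta(n)$, and single edges in them lie on linearly many $5$-cycles: in $D_1$, the edge $ca$ from the top pole $c$ to the central vertex $a$ (which also sees the bottom pole $d$) lies on all the $5$-cycles $c\,a\,d\,u_i\,v_i\,c$, i.e.\ on $\Theta(n)$ of them, even though $d(a)=5$. So any per-edge bound of the form $f(d(u),d(v))$ must grow at least linearly in $\max\{d(u),d(v)\}$, and then $\sum_{uv\in E}f(d(u),d(v))$ is at least a constant times $\sum_v d(v)^2=\Theta(n^2)$ on these very graphs; summing over edges and dividing by $5$ cannot produce a $9n-O(1)$ bound by this route. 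A second factual error is your assertion that separating $5$-cycles ``do not contribute to the extremal count'': in a $5$-connected planar triangulation the non-separating $5$-cycles number exactly $6n-12$ (each arises from a facial triangle together with two of its three neighboring facial triangles), so the theorem is precisely equivalent to the bound $3n-38$ on separating $5$-cycles, and the extremal graphs attain that bound — separating $5$-cycles (both the trivial ones, in bijection with degree-$5$ vertices, and the non-trivial ones) are the heart of the problem, not a side case to be dismissed.

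Beyond these two points, the proposal defers all of the quantitatively hard content (the local lemma, the discharging inequalities, the tightness and uniqueness analysis) to casework that is not carried out, so neither the constant $-50$ nor the uniqueness of $D_1,D_2$ is actually established. For comparison, the paper does not use discharging at all: after the exact count of non-separating $5$-cycles it inducts on $n$, performing surgery on a vertex-minimal non-trivial separating $5$-cycle $C$ — either replacing $\overline{C}$ by the rigid $11$-vertex configuration $G_{11}$ (Lemma~\ref{lem:minimal_5_cycle}), or, when $\overline{C}=G_{11}$, growing a nested sequence $H_0\subseteq\cdots\subseteq H_t$ and contracting its interior to a single vertex — while checking that $5$-connectivity is preserved and tracking exactly how trivial and non-trivial separating $5$-cycles change under the surgery (Claim~\ref{cl:cloning}); the base cases come from a computer search for $n\le 20$, which is also the real reason for the hypothesis $n\ge 20$, not an interaction between ``polar regions.'' The uniqueness of $D_1,D_2$ then falls out of the equality analysis in the contraction case, which needs the detailed structure of the $H_i$'s; nothing in your sketch plays this role, and the discharging framework you propose has no evident mechanism for recovering it.
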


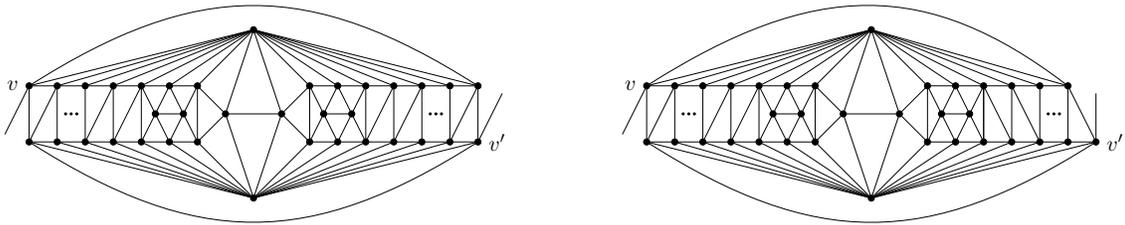
\begin{figure}[htb]
  \centering
  \begin{minipage}{.3\textwidth}
    \centering
     \resizebox{7cm}{!}{\begin{tikzpicture}[scale=1, wvertex/.style={circle, draw=red, fill=red, scale=0.3}, bvertex/.style={circle, draw=black, fill=black, scale=0.3},rvertex/.style={circle, draw=red, fill=red, scale=0.3}, sbvertex/.style={circle, draw=black, fill=black, scale=0.1}]
    
    \node [bvertex] (a) at (-0.5,0) {};
    \node [bvertex](b) at (0.5,0) {};
    \node [bvertex](c) at (0,1.5) {};
    \node [bvertex](d) at (0,-1.5) {};
    
    \node [bvertex](v1) at (-1,0.5) {};
    \node [bvertex](v2) at (-1.5,0.5) {};
    \node [bvertex](v3) at (-2,0.5) {};
    \node [bvertex](v4) at (-2.5,0.5) {};
    \node [bvertex](v5) at (-3,0.5) {};
    \node [bvertex](v6) at (-3.5,0.5) {};
    \node [bvertex, label={[font=\normalsize] left:$v$}](v7) at (-4.0,0.5) {};
        
    \node [bvertex](u1) at (-1,-0.5) {};
    \node [bvertex](u2) at (-1.5,-0.5) {};
    \node [bvertex](u3) at (-2,-0.5) {};
    \node [bvertex](u4) at (-2.5,-0.5) {};
    \node [bvertex](u5) at (-3,-0.5) {};
    \node [bvertex](u6) at (-3.5,-0.5) {};
    \node [bvertex](u7) at (-4.0,-0.5) {};

    \node [sbvertex] at (-3.15,0){};
    \node [sbvertex] at (-3.25,0){};
    \node [sbvertex] at (-3.35,0){};
    \node [sbvertex] at (3.15,0){};
    \node [sbvertex] at (3.25,0){};
    \node [sbvertex] at (3.35,0){};

    \node [bvertex](w1) at (1,0.5) {};
    \node [bvertex](w2) at (1.5,0.5) {};
    \node [bvertex](w3) at (2,0.5) {};
    \node [bvertex](w4) at (2.5,0.5) {};
    \node [bvertex](w5) at (3,0.5) {};
    \node [bvertex](w6) at (3.5,0.5) {};
    \node [bvertex](w7) at (4.0,0.5) {};
    
    \node [bvertex](x1) at (1,-0.5) {};
    \node [bvertex](x2) at (1.5,-0.5) {};
    \node [bvertex](x3) at (2,-0.5) {};
    \node [bvertex](x4) at (2.5,-0.5) {};
    \node [bvertex](x5) at (3,-0.5) {};
    \node [bvertex](x6) at (3.5,-0.5) {};
    \node [bvertex, label={[font=\normalsize] right:$v'$}](x7) at (4.0,-0.5) {};

    \node [bvertex] (y1) at (-1.25,0) {};
    \node [bvertex] (y2) at (-1.75,0) {};

    \node [bvertex] (z1) at (1.25,0) {};
    \node [bvertex] (z2) at (1.75,0) {};

    \draw[-] (v7) .. controls (-1,2.4) and (1,2.4) .. (w7);
    
    \draw[-] (u7) .. controls (-1,-2.4) and (1,-2.4) .. (x7);
    
    \draw (a) -- (b);
    \draw (a) -- (v1) -- (v2) -- (v3) -- (v4) -- (v5);
    \draw (a) -- (u1) -- (u2) -- (u3) -- (u4) -- (u5);

    \draw (b) -- (w1) -- (w2) -- (w3) -- (w4) -- (w5);
     \draw (b) -- (x1) -- (x2) -- (x3) -- (x4) -- (x5);
     \draw (w1) -- (x1) -- (z1) -- (x2) -- (z2) -- (x3) -- (w3);
     \draw (x3) -- (w4) -- (x4) -- (w5) -- (x5);
       \draw (v1) -- (u1) -- (y1) -- (u2) -- (y2) -- (u3) -- (v3) -- (u4) -- (v4) -- (u5) -- (v5);
       \draw (w1) -- (z1) -- (w2) -- (z2) -- (w3);
       \draw (v1) -- (y1) -- (v2) -- (y2) -- (v3);
       \draw (y1) -- (y2);
       \draw (z1) -- (z2);
       \draw (c)--(a);  \draw (c)--(b);  \draw (c)--(v1);  \draw (c)--(v2); \draw (c)--(v3); \draw (c)--(v4);  \draw (c)--(v5);\draw (c)--(v6);  \draw (c)--(v7);
       \draw (c)--(w1);  \draw (c)--(w2); \draw (c)--(w3); \draw (c)--(w4);  \draw (c)--(w5); \draw (c)--(w6);  \draw (c)--(w7);

     \draw (d)--(a);  \draw (d)--(b);  \draw (d)--(u1);  \draw (d)--(u2); \draw (d)--(u3); \draw (d)--(u4);  \draw (d)--(u5); \draw (d)--(u6);  \draw (d)--(u7);
     \draw (d)--(x1);  \draw (d)--(x2); \draw (d)--(x3); \draw (d)--(x4);  \draw (d)--(x5); \draw (d)--(x6);  \draw (d)--(x7);

     \draw (v5)--(v6) -- (v7) -- (u7) -- (u6) -- (v6) -- (u7); \draw (u6) --(u5);

     \draw (w5)-- (w6) -- (w7) -- (x7) -- (x6) -- (w6);
     \draw (w7) -- (x6) -- (x5);

     \node (I1) at (-4.5,-0.5) {};
     \node (I2) at (4.5,0.5){};
     \draw (v7) -- (I1);\draw (I2) -- (x7);
        
\end{tikzpicture}}
  \end{minipage}
  \hspace{3cm}
  \begin{minipage}{.4\textwidth}
    \centering
   \resizebox{7cm}{!}{\begin{tikzpicture}[scale=1, wvertex/.style={circle, draw=red, fill=red, scale=0.3}, bvertex/.style={circle, draw=black, fill=black, scale=0.3},rvertex/.style={circle, draw=red, fill=red, scale=0.3}, sbvertex/.style={circle, draw=black, fill=black, scale=0.1}]
    
    \node [bvertex] (a) at (-0.5,0) {};
    \node [bvertex](b) at (0.5,0) {};
    \node [bvertex](c) at (0,1.5) {};
    \node [bvertex](d) at (0,-1.5) {};
    
    \node [bvertex](v1) at (-1,0.5) {};
    \node [bvertex](v2) at (-1.5,0.5) {};
    \node [bvertex](v3) at (-2,0.5) {};
    \node [bvertex](v4) at (-2.5,0.5) {};
    \node [bvertex](v5) at (-3,0.5) {};
    \node [bvertex](v6) at (-3.5,0.5) {};
    \node [bvertex, label={[font=\normalsize] left:$v$}](v7) at (-4.0,0.5) {};
        
    \node [bvertex](u1) at (-1,-0.5) {};
    \node [bvertex](u2) at (-1.5,-0.5) {};
    \node [bvertex](u3) at (-2,-0.5) {};
    \node [bvertex](u4) at (-2.5,-0.5) {};
    \node [bvertex](u5) at (-3,-0.5) {};
    \node [bvertex](u6) at (-3.5,-0.5) {};
    \node [bvertex](u7) at (-4.0,-0.5) {};

    \node [sbvertex] at (-3.15,0){};
    \node [sbvertex] at (-3.25,0){};
    \node [sbvertex] at (-3.35,0){};
    \node [sbvertex] at (3.15,0){};
    \node [sbvertex] at (3.25,0){};
    \node [sbvertex] at (3.35,0){};

    \node [bvertex](w1) at (1,0.5) {};
    \node [bvertex](w2) at (1.5,0.5) {};
    \node [bvertex](w4) at (2,0.5) {};
    \node [bvertex](w5) at (2.5,0.5) {};
    \node [bvertex](w6) at (3,0.5) {};
    \node [bvertex](w7) at (3.5,0.5) {};
    
    \node [bvertex](x1) at (1,-0.5) {};
    \node [bvertex](x2) at (1.5,-0.5) {};
    \node [bvertex](x3) at (2,-0.5) {};
    \node [bvertex](x4) at (2.5,-0.5) {};
    \node [bvertex](x5) at (3,-0.5) {};
    \node [bvertex](x6) at (3.5,-0.5) {};
    \node [bvertex, label={[font=\normalsize] right:$v'$}](x7) at (4.0,-0.5) {};

    \node [bvertex] (y1) at (-1.25,0) {};
    \node [bvertex] (y2) at (-1.75,0) {};

    \node [bvertex] (z1) at (1.25,0) {};
    \node [bvertex] (z2) at (1.75,0) {};

    \draw[-] (v7) .. controls (-1,2.4) and (1,2.4) .. (w7);
    
    \draw[-] (u7) .. controls (-1,-2.4) and (1,-2.4) .. (x7);
    
    \draw (a) -- (b);
    \draw (a) -- (v1) -- (v2) -- (v3) -- (v4) -- (v5);
    \draw (a) -- (u1) -- (u2) -- (u3) -- (u4) -- (u5);

    \draw (b) -- (w1) -- (w2)  -- (w4) -- (w5);
     \draw (b) -- (x1) -- (x2) -- (x3) -- (x4) -- (x5);
     \draw (w1) -- (x1) -- (z1) -- (x2) -- (z2) -- (x3) -- (w4);
     \draw (x3) -- (w4) -- (x4) -- (w5) -- (x5);
       \draw (v1) -- (u1) -- (y1) -- (u2) -- (y2) -- (u3) -- (v3) -- (u4) -- (v4) -- (u5) -- (v5);
       \draw (w1) -- (z1) -- (w2) -- (z2) -- (w4);
       \draw (v1) -- (y1) -- (v2) -- (y2) -- (v3);
       \draw (y1) -- (y2);
       \draw (z1) -- (z2);
       \draw (c)--(a);  \draw (c)--(b);  \draw (c)--(v1);  \draw (c)--(v2); \draw (c)--(v3); \draw (c)--(v4);  \draw (c)--(v5);\draw (c)--(v6);  \draw (c)--(v7);
       \draw (c)--(w1);  \draw (c)--(w2); \draw (c)--(w4);  \draw (c)--(w5); \draw (c)--(w6);  \draw (c)--(w7);

     \draw (d)--(a);  \draw (d)--(b);  \draw (d)--(u1);  \draw (d)--(u2); \draw (d)--(u3); \draw (d)--(u4);  \draw (d)--(u5); \draw (d)--(u6);  \draw (d)--(u7);
     \draw (d)--(x1);  \draw (d)--(x2); \draw (d)--(x3); \draw (d)--(x4);  \draw (d)--(x5); \draw (d)--(x6);  \draw (d)--(x7);

     \draw (v5)--(v6) -- (v7) -- (u7) -- (u6) -- (v6) -- (u7); \draw (u6) --(u5);

     \draw (w5)-- (w6) -- (w7) -- (x7) -- (x6);
     \draw (w6)--(x5);
     \draw (w7) -- (x6) -- (x5);

     \node (I1) at (-4.5,-0.5) {};
     \node (I2) at (4,0.5){};
     \draw (v7) -- (I1);\draw (I2) -- (x7);
        
\end{tikzpicture}}
  \end{minipage}
  \caption{Extremal graphs $D_1$ and $D_2$ of Theorem \ref{thm:5-cycle-5-connected} when $n$ is even and odd respectively. The vertices $v$ and $v'$ are adjacent, as indicated by the two half lines.}
  \label{fig:extremal}
\end{figure}

For $n\leq 19$, the values of $N_5(C_5, n)$ are listed in Table~\ref{table:small_values}.
We remark that there does not exist any $5$-connected planar triangulation on fewer than $12$ vertices, and also on $13$ vertices.

\begin{table}[htb]\label{table:small_values}
    \centering
    \begin{tabular}{|c|c|c|}
        \hline
        $n$ & $N_5(C_5, n)$ & Maximum number of separating $5$-cycles \\
        \hline
        12  & 72  & 12   \\
        14 & 84  & 12   \\
        15 & 90  & 12   \\
        16  & 98  & 14 \\
        17  & 103  & 13    \\
        18  & 112  & 16     \\
        19  & 122  & 20   \\
        20 & 130 & 22\\
        \hline
    \end{tabular}
    \caption{Values of $N_5(C_5,n)$ for $n\leq 20$.}
    \label{tab:example}
\end{table}
For $k$-cycles with $k\geq 6$, recall that Hakimi and Schmeichel~\cite{HS1979} showed that $N(C_k,n)= \Theta(n^{\floor{k/2}})$. Moreover, the extremal graphs they provided can be easily modified to be $4$-connected. Hence, it follows that for every $k\geq 4$, $N_4(C_k,n) =\Theta(n^{\floor{k/2}})$. As seen from Theorem \ref{thm:5-cycle-5-connected}, the behavior of $N_5(C_k,n)$ can be significantly different due to the high connectivity constraint. Indeed, we show that for every $k\geq 6$, $N_5(C_k,n) = \Theta(n^{\floor{k/3}})$, in contrast to the fact that $N(C_k,n)= N_4(C_k,n) = \Theta(n^{\floor{k/2}})$.

\begin{theorem}\label{thm:long-cycles}
    For every $k\geq 6$, there exists some constant $C=C(k)$ such that for sufficiently large $n$, every $n$-vertex $5$-connected planar graph has at most $C\cdot n^{\floor{k/3}}$ many cycles of length $k$.
\end{theorem}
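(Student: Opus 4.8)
The plan is to reduce to triangulations and then recover a $k$-cycle vertex-by-vertex, so that at most $\floor{k/3}$ of its vertices cost a factor of $n$ while the rest are pinned down by a structural lemma. For the reduction: if the $5$-connected planar graph $G$ is not already a triangulation, fix a plane embedding and add chords inside non-triangular faces until every face is a triangle; adding edges cannot lower connectivity and a planar triangulation has connectivity at most $5$, so the result is an $n$-vertex $5$-connected planar triangulation containing $G$ as a spanning subgraph, and every $k$-cycle of $G$ survives. So assume from now on that $G$ is a $5$-connected planar triangulation with a fixed plane embedding.

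The engine of the argument is the structural fact that \textbf{in a $5$-connected planar triangulation any two vertices have at most two common neighbours} (equivalently, $G$ contains no $K_{2,3}$). For adjacent $u,v$ this is immediate, since a third common neighbour would give a third face on the edge $uv$. For non-adjacent $u,v$ with common neighbours $w_1,w_2,w_3$: each $4$-cycle $uw_ivw_j$ cannot be a $4$-cut, so one of its two sides is empty and hence triangulated by the chord $w_iw_j$ (it cannot be $uv$ since $u\not\sim v$); thus $w_1w_2w_3$ is a triangle and $u,v,w_1,w_2,w_3$ span a copy of $K_5-e$, which, being $3$-connected, is embedded in the unique way with six triangular faces, and since $n\ge 6$ one of these faces contains a further vertex of $G$, making its bounding triangle separating --- a contradiction. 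The upshot is that any vertex of the cycle that is forced to be a common neighbour of two already-known vertices has only $O(1)$ possibilities.

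When $3\mid k$ the count is clean. Write $C=v_1\cdots v_k$, cut its edge set cyclically into $k/3$ arcs of exactly $3$ edges, and for each arc choose its middle edge (at most $3n-6$ options each, since $G$ is planar). Given all of these middle edges, each vertex shared by two consecutive arcs is a common neighbour of two already-known vertices, hence has $\le 2$ possibilities, and the cyclic arrangement of the arcs contributes a factor depending only on $k$; this gives $O_k(n^{k/3})$ cycles. The same idea, applied after first spending one factor of $n$ to fix an edge of $C$, shows that the number of paths of length $L$ between two fixed vertices is $O_k(n^{\floor{L/3}})$: here a leftover of one or two edges can be absorbed by the two fixed endpoints, since a vertex forced to be a common neighbour of a fixed endpoint and a revealed vertex also has $\le 2$ possibilities.

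When $k\not\equiv 0\pmod 3$ one arc must contain $4$ or $5$ edges, and the naive bound overshoots $\floor{k/3}$ by a factor of $n$; I would recover this factor using chords of $C$. If $v_{i-1}v_{i+1}\in E(G)$ for some $i$ --- a \emph{triangle ear}, and a chord of span $2$ or $3$ always produces one (using non-separation of the short cycle the chord creates) --- then choosing the edge $v_{i-1}v_i$ pins $v_{i+1}$ for free as a common neighbour of $v_{i-1}$ and $v_i$, effectively removing an extra edge from the part that still has to be counted; one such ear suffices when $k\equiv 1\pmod 3$, and a pair of ears at consecutive positions when $k\equiv 2\pmod 3$. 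The hard part will be the remaining cycles: a \emph{chordless} $k$-cycle, which is necessarily separating, or one whose chords all have large span. For these I would analyse the near-triangulations inside and outside $C$, whose interior vertices all have degree $\ge 5$ and are constrained by the $K_{2,3}$-free property, to show that either one of the two sides has only boundedly many interior vertices --- so that $C$ is essentially the link of $O(1)$ vertices and there are only $O(n)$ such cycles --- or $C$ admits a more economical encoding. Making this residual analysis rigorous is the step I expect to carry the bulk of the technical work.
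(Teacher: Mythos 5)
Your reduction to triangulations, the proof that two vertices of a $5$-connected planar triangulation have at most two common neighbours, and the count for $3\mid k$ (choose every third edge, pin the intermediate vertices as common neighbours) are all correct, and the last of these is exactly the paper's argument. But for $k\not\equiv 0\pmod 3$ your proposal has a genuine gap, and it is precisely at the point you yourself flag as unresolved. The ``triangle ear'' device only helps when the $k$-cycle has a chord of span $2$ or $3$ (and for $k\equiv 2\pmod 3$ you need two such savings arranged compatibly with your choice of spaced edges, which is a further unverified requirement). For the residual cycles --- chordless ones, or ones all of whose chords have large span --- you offer only a hoped-for dichotomy (``one side has $O(1)$ interior vertices or $C$ admits a more economical encoding'') with no argument. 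Noting that a chordless cycle is separating does not bound how many there are: both sides of a separating $k$-cycle can be large, and nothing in your sketch rules out such cycles being the dominant term. Since this residual case is exactly where the extra factor of $n$ must be recovered, the proof of the theorem for $k\equiv 1,2\pmod 3$ is not established.

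For comparison, the paper closes this case with a different mechanism that your outline does not contain: a light-edge lemma. From the Barnette--Borodin theorem (a planar graph of minimum degree $3$ has an edge with $d(u)+d(v)\le 13$) they deduce that any planar graph of minimum degree $\ge 2$ in which two vertices have at most two common neighbours has an edge with $d(u)+d(v)\le 39$, and then they induct on the number of edges: $k$-cycles avoiding the light edge $uv$ are counted in $G-uv$ by induction, while for $k$-cycles through $uv$ the two cycle-neighbours of $u$ and $v$ cost only a bounded factor ($\le 342$), after which the usual ``choose every third edge and pin common neighbours'' scheme needs only $p-1$ edge choices. That is the missing idea; without it (or a rigorous substitute for your residual analysis), your argument proves the theorem only for $k$ divisible by $3$.
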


Recently in~\cite{GPSTZ2}, Gy\H{o}ri, Paulos, Salia, Tompkins and Zamora showed that the maximum number of $k$-cycles (for $k\geq 5$) in an $n$-vertex $C_4$-free planar graph is $O(n^{\floor{k/3}})$, and this upper bound is asymptotically tight. 
Using similar ideas of \cite{GPSTZ2}, we strengthen their result by showing the following theorem, which will imply Theorem \ref{thm:long-cycles}.

\begin{restatable}{theorem}{stronger}\label{thm:stronger}
     For every $k\geq 6$, if $G$ is an $m$-edge planar graph such that every two distinct vertices in $G$ have at most two common neighbors, then $G$ has at most    
     $\lp 342\cdot \frac{4^{\floor{k/3}}}{{\floor{k/3}}!}\rp \cdot m^{\floor{k/3}}$ many cycles of length $k$. 
\end{restatable}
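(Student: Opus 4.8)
The plan is to bound the number of $k$-cycles by the number of "encodings'', where an encoding of a $k$-cycle records an \emph{unordered} set of $q:=\lfloor k/3\rfloor$ of its edges together with a bounded amount of data needed to rebuild the cycle. Write $k=3q+r$ with $r\in\{0,1,2\}$. Given a $k$-cycle $C$ with a chosen cyclic traversal $v_0v_1\cdots v_{k-1}v_0$, set $S(C):=\{\,v_{3i}v_{3i+1}:0\le i\le q-1\,\}$; this is a set of $q$ pairwise disjoint edges, and $E(C)\setminus S(C)$ is the disjoint union of $q$ internally disjoint paths ("gaps''): the $q-1$ length-$2$ paths $v_{3i+1}v_{3i+2}v_{3i+3}$, together with the wrap-around path $v_{3q-2}v_{3q-1}\cdots v_{k-1}v_0$ of length $r+2\in\{2,3,4\}$. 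Since $G$ has only $m$ edges there are at most $\binom{m}{q}\le m^{q}/q!$ possible sets $S$. So it suffices to show that each fixed $q$-edge set $S$ occurs as $S(C)$ for at most $342\cdot 4^{q}$ cycles $C$: assigning to each $k$-cycle one canonical such set (each cycle has only $O(1)$ candidates) and summing over all $q$-edge sets then yields the bound $\tfrac{m^{q}}{q!}\cdot 342\cdot 4^{q}$.

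To count the cycles $C$ with $S(C)=S$, I reconstruct them from $S$ in the obvious order: fix a cyclic arrangement of the $q$ edges of $S$, fix an orientation of each, then fill in the $q$ gaps in turn. The orientations contribute a factor at most $2^{q}$. Each length-$2$ gap joins a prescribed endpoint $b$ of one chosen edge to a prescribed endpoint $a$ of the next through a vertex of $N(a)\cap N(b)$, a set of size at most $2$ by hypothesis, so once the arrangement and orientations are fixed the short gaps together can be completed in at most $2^{q}$ ways; multiplying gives the factor $4^{q}$. When $r=0$ the wrap-around gap is itself a length-$2$ gap, costing at most $2$. When $r\in\{1,2\}$ it has length $3$ or $4$ and cannot be rebuilt from its two endpoints alone in boundedly many ways, so one argues slightly differently: for instance, a $(3q+1)$-cycle carrying a chord between two vertices at distance $2$ along it arises from a $3q$-cycle by subdividing one edge with a common neighbour of its endpoints (at most two choices, by the codegree hypothesis), while those with no such chord are pinned down by the same edge-and-gap encoding using the structure adjacent to the long gap; the $r=2$ case is analogous with two such operations. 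In every case this costs only a constant, absorbed into the $342$, which reduces general $k$ to $k\equiv 0\pmod 3$.

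It remains to bound, by an absolute constant, the number of cyclic arrangements of the $q$ edges of $S$ that can actually occur in some valid $C$; this is the crux, and the only step where planarity — rather than mere sparsity of $G$ — is genuinely used. Contracting each edge of $S$ to a vertex turns such a $C$ into a cycle of length $2q$ in a planar minor $G'$ of $G$ passing through the $q$ contracted vertices $p_{1},\dots,p_{q}$, with consecutive $p_{i}$'s joined through an uncontracted common neighbour. The hypothesis on $G$ forces any two of the $p_i$ to have at most $8$ such usable common neighbours, and any single vertex of $G'$ can serve as a connector at most once since the cycle visits it once; one must then show that, under these constraints, a planar cycle can thread the $q$ prescribed vertices in only boundedly many cyclic orders, ruling out the naive $q!$ possibilities. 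I expect this "planar rigidity of the threading order'' to be the main obstacle: it is exactly what keeps the combinatorial loss to a constant rather than $q!$, and hence what allows the final bound to retain the $1/q!$ factor; once it is in hand the codegree hypothesis supplies everything else, namely the clean factor $4^{q}=2^{q}\cdot 2^{q}$ from orientations and short-gap fillings and the boundedness of the exceptional gap.
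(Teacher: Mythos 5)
Your plan stalls exactly where you say it does, and that missing step is a genuine gap, not a routine verification. You reduce everything to the claim that a fixed unordered set $S$ of $q=\lfloor k/3\rfloor$ disjoint edges can be threaded by a valid $k$-cycle in only an absolute constant number of cyclic orders, and you explicitly state that you do not know how to prove this ``planar rigidity''. Nothing in the proposal supplies it, and as stated it is also the wrong target: an absolute-constant bound on realizable threading orders is stronger than what the theorem needs and is implausible (one should expect planar, codegree-two examples in which the contracted connection pattern is a planar graph with exponentially many Hamiltonian cycles, so exponentially many orders in $q$). The paper never proves, nor needs, any such rigidity statement: for $k\equiv 0\pmod 3$ it counts by choosing which oriented edges of $G$ occupy the designated alternate positions $v_0v_1,v_3v_4,\dots,v_{3p-3}v_{3p-2}$ of the labelled cycle, at most $\binom{m}{p}2^p$ choices, and then the codegree-two hypothesis gives at most two choices for each intermediate vertex $v_2,v_5,\dots$, yielding $\frac{4^p}{p!}m^p$ directly, with no factor ever charged for a cyclic arrangement of an unordered set.

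The second gap is your treatment of $k\equiv 1,2\pmod 3$. The chord/subdivision reduction to the divisible case is only a sketch: a $(3q+1)$- or $(3q+2)$-cycle need not carry the chord you invoke, and in the chordless case the one or two extra vertices in the long gap cannot be recovered from the codegree hypothesis, so they cost an unbounded factor (up to a degree squared, or an extra factor of $m$ if you record another edge), which cannot simply be ``absorbed into the $342$''. This is precisely where the paper uses planarity: after reducing to minimum degree at least $2$, Lemmas \ref{lem:light-edges} and \ref{lem:light-edges-min-degree-2} (Barnette--Borodin light edges) produce an edge $uv$ with $d(u)+d(v)\le 39$; cycles avoiding $uv$ are handled by induction on $m$, and cycles through $uv$ are counted by placing $uv$ at $v_0v_1$, paying $(d(u)-1)(d(v)-1)\le 342.25$ for the pair $(v_2,v_{3p+1})$, and then running the same alternating edge-and-common-neighbor count on the remaining positions. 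That argument is both where the constant $342$ comes from and the only place planarity enters the paper's proof; your proposal has no substitute for it.
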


Observe that in a $C_4$-free graph, every two distinct vertices have at most one common neighbor. Hence, Theorem \ref{thm:stronger} is a strengthening of the above upper bound of Gy\H{o}ri et~al~\cite{GPSTZ2}. However, apart from the use of Lemma \ref{lem:light-edges-min-degree-2}, our proof of Theorem \ref{thm:stronger} essentially follows along the same lines of Theorem $10$ in \cite{GPSTZ2}. We also remark that Lemma \ref{lem:light-edges-min-degree-2} can be adapted to all $K_{2,t}$-free planar graphs with minimum degree at least $2$ (with $t\geq 2$). Hence our proof of Theorem \ref{thm:stronger} implies that for any fixed integers $t\geq 2$ and $k\geq 5$, the maximum number of $k$-cycles in an $n$-vertex $K_{2,t}$-free planar graph is $O_{t,k}(n^{\floor{k/3}})$.

\begin{figure}[htb]
  \centering
  \begin{minipage}{.25\textwidth}
    \centering
     \resizebox{4.5cm}{!}{\begin{tikzpicture}[
    scale=1,
    wvertex/.style={circle, draw=red,   fill=red,   scale=0.3},
    bvertex/.style={circle, draw=black, fill=black, scale=0.3},
    rvertex/.style={circle, draw=red,   fill=red,   scale=0.3},
    sbvertex/.style={circle, draw=black, fill=black, scale=0.1}
  ]
  \node[rvertex] (T) at (2.8,2) {};
  \node[rvertex] (B) at (2.8,-2) {};

  \foreach \i in {0,...,7} {
    \pgfmathsetmacro{\x}{0.8*\i}
    \node[bvertex] (U\i) at (\x, 0.4)  {};
    \node[bvertex] (L\i) at (\x,-0.4) {};
  }

  \foreach \i in {0,...,7}
    \draw (T) -- (U\i);

  \foreach \i in {0,...,7}
    \draw (L\i) -- (B);

  \foreach \i in {0,...,6} {
    \pgfmathtruncatemacro{\j}{\i+1}
    \draw (U\i) -- (U\j);
    \draw (L\i) -- (L\j);
  }

  \foreach \i in {0,...,7}
    \draw (U\i) -- (L\i);

  \foreach \i in {0,...,6} {
    \pgfmathtruncatemacro{\j}{\i+1}
    \draw (U\j) -- (L\i);
  }

\end{tikzpicture}}
  \end{minipage}
  \hspace{1cm}
  \begin{minipage}{.25\textwidth}
    \centering
   \resizebox{4.5cm}{!}{
\tikzset{
  pics/fanGrid/.style={
    code={
      \def\cell{0.4}   
      \def\half{0.2}   

      \node[red,circle,fill,scale=0.3] (T) at (2*\cell,  2) {};
      \node[red,circle,fill,scale=0.3]          at (2*\cell, -2) {};

      \foreach \i in {0,...,4} {
        \pgfmathsetmacro\x{\i*\cell}
        \node[black,circle,fill,scale=0.3] at (\x,  \half) {};
        \node[black,circle,fill,scale=0.3] at (\x, -\half) {};
        \draw (T) -- (\x,\half)  (\x,-\half) -- (2*\cell,-2);
        \draw (\x,\half) -- (\x,-\half);
        \ifnum\i<4
          \pgfmathtruncatemacro{\j}{\i+1}
          \draw (\x,\half) -- ({\j*\cell},\half)
                (\x,-\half) -- ({\j*\cell},-\half)
                ({\j*\cell},\half) -- (\x,-\half);
        \fi
      }
    }
  }
}

\begin{tikzpicture}
    \pic at (2,0.8)[rotate=-90] {fanGrid};

    \pic at (3.2,2)[rotate=0] {fanGrid};

    \pic at (2,3.2)[rotate=90] {fanGrid};

    \pic at (0.8,2)[rotate=180] {fanGrid};

\end{tikzpicture}}
  \end{minipage}
  \hspace{1cm}
   \begin{minipage}{.25\textwidth}
    \centering
   \resizebox{4.5cm}{!}{
\tikzset{
  pics/fanGrid/.style={
    code={
      \def\cell{0.4}   
      \def\half{0.2}   

      \node[red,circle,fill,scale=0.3] (T) at (2*\cell,  2) {};
      \node[red,circle,fill,scale=0.3]          at (2*\cell, -2) {};

      \foreach \i in {0,...,4} {
        \pgfmathsetmacro\x{\i*\cell}
        \node[black,circle,fill,scale=0.3] at (\x,  \half) {};
        \node[black,circle,fill,scale=0.3] at (\x, -\half) {};
        \draw (T) -- (\x,\half)  (\x,-\half) -- (2*\cell,-2);
        \draw (\x,\half) -- (\x,-\half);
        \ifnum\i<4
          \pgfmathtruncatemacro{\j}{\i+1}
          \draw (\x,\half) -- ({\j*\cell},\half)
                (\x,-\half) -- ({\j*\cell},-\half)
                ({\j*\cell},\half) -- (\x,-\half);
        \fi
      }
    }
  }
}

\begin{tikzpicture}
    \pic at (2,0.8)[rotate=-90] {fanGrid};

    \pic at (3.2,2)[rotate=0] {fanGrid};

    \pic at (2,3.2)[rotate=90] {fanGrid};

    \pic at (0.8,2)[rotate=180] {fanGrid};

\draw (1.8,0.8)-- (0.8,1.8);
\draw (0.8,2.2) -- (1.8,3.2);
\draw (2.2,3.2) -- (3.2,2.2);
\draw (3.2,1.8) -- (2.2,0.8);
\draw (0.8,1.8) -- (2.2,0.8)--(3.2,2.2)--(1.8,3.2)--(0.8,1.8)--(3.2,2.2);

\end{tikzpicture}}
  \end{minipage}
  \caption{Diamond graph on $18$ vertices (left); a width-$5$ inflated blow-up of $C_4$ (middle); a $5$-connected supergraph of $B^{(5)}(C_4)$ (right): the outer face is triangulated similarly, but now shown for clarity's purpose.}
  \label{fig:diamond}
\end{figure}
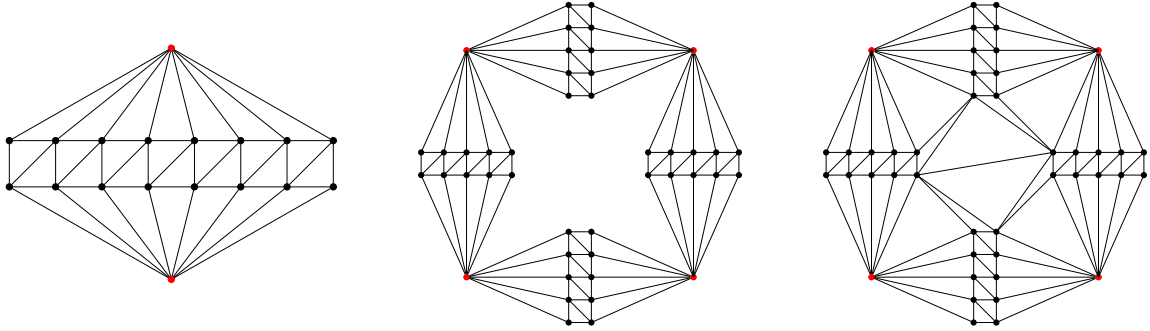

We remark that the upper bounds in Theorem \ref{thm:long-cycles} and Theorem \ref{thm:stronger} are asymptotically tight. 
Given an integer $\ell\geq 2$, let $F_1, F_2$ be two fan graphs, each on $\ell+1$ vertices, i.e., $F_1$ is obtained from joining a vertex $x_1$ with a path $P_1:= v_1 v_2 \cdots v_\ell$ on $\ell$ vertices, and similarly $F_2$ is obtained from joining a vertex $x_2$ with a path $P_2:= u_1 u_2 \cdots u_\ell$ on $\ell$ vertices.
Given an even integer $2\ell+2$ where $\ell\geq 2$, we define the \textit{diamond graph} on $2\ell+2$ vertices as the graph $D$ obtained from two fan graphs $F_1$, $F_2$, each on $\ell+1$ vertices, such that $v_i u_i\in E(D)$ for every $i\in [\ell]$, and $v_{j+1} u_j\in E(D)$ for every $j\in [\ell-1]$, see Figure \ref{fig:diamond}. 

For each integer $k\ge 6$ and sufficiently large $n$, we want to construct $n$-vertex $5$-connected  planar graphs with $\Omega(n^{\floor{k/3}})$ many $k$-cycles. Consider the following class of graphs. For $k\ge 9$, let $p:=\floor{k/3}$. Then $p\ge 3$.
Given a cycle $C_p$, we define the \textit{width-$w$ inflated blow-up} of $C_p$, denoted as $B^{(w)}(C_p)$, as the graph obtained from $C_p$ by replacing each edge of $C_p$ by the diamond graph on $2w+2$ vertices, see Figure \ref{fig:diamond}. Observe that $B^{(w)}(C_p)$ can be triangulated into a $5$-connected plane triangulation, see e.g., Figure \ref{fig:diamond} (right). Hence given any integer $p\geq 3$ and sufficiently large $n$, let $w:=\floor{(n-p)/2p}$ and we can construct an $n$-vertex $5$-connected planar triangulation containing $B^{(w)}(C_p)$ as subgraph. It is not hard to see that for $w=\floor{(n-p)/2p}$, $B^{(w)}(C_p)$ has at least $(2w)^p=\Omega(n^p)$ many $k$-cycles. For $6\leq k\leq 8$, the graph $D_1$ or $D_2$ (in Theorem \ref{thm:5-cycle-5-connected}) on $n$ vertices has $\Omega(n^2)$ many $6$-cycles, $7$-cycles and $8$-cycles. This shows that the upper bounds in Theorem \ref{thm:long-cycles} and Theorem \ref{thm:stronger} are asymptotically tight.

We also remark that Theorem \ref{thm:stronger} is stronger than Theorem \ref{thm:long-cycles}.
A $5$-connected planar graph clearly satisfies the assumption in Theorem \ref{thm:stronger}, as otherwise there will be a separating $4$-cycle. Moreover, since an $n$-vertex ($n\ge 3$) planar graph has at most $3n-6$ edges, Theorem \ref{thm:long-cycles} follows from Theorem \ref{thm:stronger} by setting $m=3n-6$. It will be interesting to determine the best possible coefficient of $n^{\floor{k/3}}$ and $m^{\floor{k/3}}$ in Theorem \ref{thm:long-cycles} and Theorem \ref{thm:stronger}.

\smallskip

\textbf{Organization and Notation.} We show some preliminary lemmas in Section \ref{sec:preliminaries}, show Theorem \ref{thm:5-cycle-5-connected} in Section \ref{sec:5-cycle} and show Theorem \ref{thm:stronger} in Section \ref{sec:k-cycle}.
Given a vertex $v\in V(G)$ in a graph $G$, we use $N_G(v)$ to denote the set of neighbors of $v$ in $G$, and use $d_G(v)$ to denote $|N_G(v)|$. We often ignore the subscript $G$ when it is clear from the context. Moreover, given a graph $G$, we use $N(C_k,G)$ to denote the number of $k$-cycles in $G$.

\section{Preliminaries}\label{sec:preliminaries}

In this section, we show a number of lemmas needed for the proof of Theorem \ref{thm:5-cycle-5-connected}.
A \textit{near triangulation} is a plane graph in which all faces except possibly its infinite face are bounded by triangles. For a cycle $C$ in a near triangulation $G$, we use $\overline{C}$ to denote the subgraph of $G$ consisting of all vertices and edges of $G$ contained in the closed disc in the plane bounded by $C$. The {\it interior} of $C$ in $G$, denoted by $\textrm{Int}_G(C)$, is then defined as the subgraph $\overline{C}-C$. 
Similarly, we define the \textit{exterior} of $C$ in $G$, denoted by $\textrm{Ext}_G(C)$, as the subgraph of $G$ induced by $V(G)\backslash V(\overline{C})$.

Let $W_6$ be the wheel graph on $6$ vertices, i.e., the graph obtained from $C_5$ by connecting a new vertex with every vertex of the $C_5$. Let $G_{11}$ be the graph as shown in Figure~\ref{fig:G11}. Given a $5$-cycle $C$ in a near triangulation $G$, we say $C$ is \textit{special} if $\textrm{Int}_G(C)$ has at least two vertices. For a separating $5$-cycle $C$ in a planar triangulation $G$, we say $C$ is \textit{non-trivial} if both $\textrm{Int}_G(C)$ and $\textrm{Ext}_G(C)$ have at least two vertices, and
\textit{trivial} if either $\overline{C}=W_6$ or $G-\textrm{Int}_G(C)=W_6$.

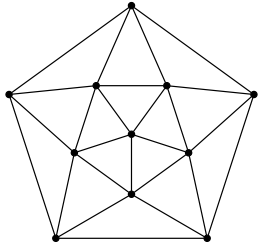
\begin{figure}[htb]
\begin{center}
      \resizebox{3.5cm}{!}{\begin{tikzpicture}[scale=1, wvertex/.style={circle, draw=red, fill=red, scale=0.2}, bvertex/.style={circle, draw=black, fill=black, scale=0.2},rvertex/.style={circle, draw=red, fill=red, scale=0.2}, sbvertex/.style={circle, draw=black, fill=black, scale=0.1}] 

\foreach \i in {1,...,5} {
  \node [bvertex] (A\i) at (72*\i+18:1.5) {}; 
}

\draw (A1) -- (A2) -- (A3) -- (A4) -- (A5) -- (A1);

\foreach \i in {1,...,5} {
  \node [bvertex] (B\i) at (72*\i+18+180:0.7) {}; 
}

\draw (B1) -- (B2) -- (B3) -- (B4) -- (B5) -- (B1);

\node [bvertex] (C) at (0,0) {};

\draw (C) -- (B1);
\draw (C) -- (B2);
\draw (C) -- (B3);
\draw (C) -- (B4);
\draw (C) -- (B5);

\draw (A3) -- (B1) --(A4) -- (B2) -- (A5) --(B3) --(A1) -- (B4) --(A2) --(B5) --(A3);

\end{tikzpicture}}
      \caption{The graph $G_{11}$.}
      \label{fig:G11}
\end{center} 
\end{figure}

\begin{observation}\label{obs:int_degree}
Let $G$ be an $n$-vertex near triangulation with no separating triangle or $4$-cycle. Suppose its outer cycle $C$ is a $5$-cycle and $n\ge 6$. Then the followings hold.
\begin{itemize}
\item [(i)] $C$ is a cycle with no chord, i.e., $G[V(C)]=C$, and every vertex in $C$ has degree at least three and every vertex in $G-C$ has degree at least five in $G$;
\item [(ii)] and if $C$ is the unique special $5$-cycle in $G$, every vertex in $C$ has degree at least four in $G$.
\end{itemize}
 \end{observation}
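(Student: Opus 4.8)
The plan is to handle (i) and (ii) separately, in each case leaning on the fact that a near triangulation without a separating triangle or $4$-cycle cannot contain a cycle of length at most $4$ that disconnects two of its vertices.

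\emph{Part (i).} First I would rule out chords. If $C=v_1v_2v_3v_4v_5$ had a chord, say $v_1v_3$, then since $n\ge 6$ the closed disc bounded by $C$ contains a vertex outside $V(C)$, lying either in the region bounded by the triangle $v_1v_2v_3$ or in the one bounded by the $4$-cycle $v_1v_3v_4v_5$. In the first case $v_1v_2v_3$ is a separating triangle; in the second the triangle $v_1v_2v_3$ is a face, so $d_G(v_2)=2$, and then $v_1v_3v_4v_5$ is a separating $4$-cycle (deleting its vertices isolates $v_2$ and leaves the nonempty remaining region). Either way we contradict the hypothesis, so $G[V(C)]=C$. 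With no chord, for each $v_i\in V(C)$ the bounded face on the edge $v_{i-1}v_i$ is a triangle $v_{i-1}v_ix$ with $x\neq v_{i+1}$, giving $v_i$ a third neighbour, so $d_G(v_i)\ge 3$. Finally, for $u\in V(G)-V(C)$: since $u$ is not on the outer face, $N_G(u)$ forms a cycle $C_u$ of length $d_G(u)$ with $u$ adjacent to every vertex of $C_u$; if $d_G(u)\le 4$ then (using $n\ge 6$) $G-V(C_u)$ contains the isolated vertex $u$ together with at least one further vertex, so $C_u$ is a separating triangle or $4$-cycle, a contradiction. Hence $d_G(u)\ge 5$.

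\emph{Part (ii), reduction to $n=7$.} Suppose some vertex of $C$, say $v_1$, has $d_G(v_1)=3$. By (i) its third neighbour $u$ lies in $G-C$, and since $d_G(v_1)=3$ the two bounded faces at $v_1$ are the triangles $v_1v_2u$ and $v_1v_5u$, which contain no further vertices; in particular $u$ is adjacent to $v_1,v_2,v_5$, and $\overline{C'}=\overline{C}-v_1$, where $C':=v_2v_3v_4v_5u$ is a $5$-cycle. Consequently $V(\textrm{Int}_G(C'))=V(\textrm{Int}_G(C))\setminus\{u\}$, so $|\textrm{Int}_G(C')|=|\textrm{Int}_G(C)|-1$. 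Since $C$ is the unique special $5$-cycle it is itself special, so $|\textrm{Int}_G(C)|\ge 2$; and if $|\textrm{Int}_G(C)|\ge 3$ then $C'$ is a special $5$-cycle with $v_1\in V(C)\setminus V(C')$, contradicting uniqueness. Hence $|\textrm{Int}_G(C)|=2$; writing $\textrm{Int}_G(C)=\{u,w\}$ we get $n=7$ and $\textrm{Int}_G(C')=\{w\}$.

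\emph{Part (ii), the finite case.} Now $V(\overline{C'})=\{v_2,v_3,v_4,v_5,u,w\}$, so $N_G(w)\subseteq\{v_2,v_3,v_4,v_5,u\}$ and $d_G(w)\le 5$; by (i), $d_G(w)=5$ and $w$ is adjacent to all of $C'$. As $w$ is interior, its incident faces are the triangles $wv_2v_3,\,wv_3v_4,\,wv_4v_5,\,wv_5u,\,wuv_2$, so $\overline{C'}$ is a $6$-wheel with hub $w$ and $C'$ has no chord. The faces at $u$ are therefore $uv_1v_2,\,uv_1v_5$ (from $d_G(v_1)=3$) and $uv_2w,\,uwv_5$ (from the wheel), which exhaust the cyclic order around $u$, so $N_G(u)=\{v_1,v_2,v_5,w\}$ and $d_G(u)=4$, contradicting (i). Hence no vertex of $C$ has degree $3$, proving (ii). The one delicate point is this $n=7$ endgame, together with the observation that forces it — constructing $C'$ and noting $|\textrm{Int}_G(C')|=|\textrm{Int}_G(C)|-1$; everything else is a routine combination of ``the link of an interior vertex is a cycle'' with the prohibition of separating triangles and $4$-cycles.
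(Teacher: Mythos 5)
Your proposal is correct. Part (i) follows the paper's argument in all essentials: a chord would force a separating triangle or a separating $4$-cycle, each $v_i$ picks up a third neighbour from the triangular face on a cycle edge, and an interior vertex of degree at most $4$ would have its link as a separating triangle or $4$-cycle. In part (ii) you use the same key construction as the paper — replace the degree-$3$ vertex $v_1$ by its third neighbour $u$ to get the $5$-cycle $C'=uv_2v_3v_4v_5$ and contradict uniqueness — but you finish differently: you count $|\textrm{Int}_G(C')|=|\textrm{Int}_G(C)|-1$, which only yields that $C'$ is special when $|\textrm{Int}_G(C)|\ge 3$, and you then dispose of the residual case $|\textrm{Int}_G(C)|=2$ (so $n=7$) by an explicit analysis showing $\overline{C'}$ is the wheel $W_6$ and hence $d_G(u)=4$, contradicting (i). That endgame is valid, but the paper avoids it with a uniform argument: $\overline{C'}$ is itself a near triangulation on $n-1\ge 6$ vertices with no separating triangle or $4$-cycle and outer $5$-cycle $C'$, so by (i) applied to $\overline{C'}$ the cycle $C'$ is chordless; since $d_G(u)\ge 5$ and only $v_1,v_2,v_5$ of $u$'s neighbours lie outside $\textrm{Int}_G(C')$, $u$ has at least two neighbours in $\textrm{Int}_G(C')$, so $C'$ is special for every $n\ge 7$ in one stroke. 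So the trade-off is purely one of economy: your route needs the finite $n=7$ case (your $W_6$ analysis is essentially a special case of the paper's Lemma~\ref{lem:minimal_5_cycle}), while the paper's observation that $\overline{C'}$ inherits all hypotheses of (i) makes that case analysis unnecessary.
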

 \begin{proof}
 Suppose $G$ is an $n$-vertex near triangulation with no separating triangle or $4$-cycle such that its outer cycle $C=v_1v_2v_3v_4v_5v_1$ and $n\ge 6$. It follows that $G$ is not outerplanar as $n\ge 6$ and $|V(C)|=5$. Since $G$ has no separating triangle or $4$-cycle, we know that $v_i$ is not adjacent to any vertex in $V(C)\backslash N_C(v_i)$ for each $i\in [5]$ and this implies that $G[V(C)]=C$. Now we show that each vertex in $C$ must have a neighbor in $V(G-C)$. Suppose not. Without loss of generality, we may assume that $v_1$ has no neighbor in $V(G-C)$. Then $v_2v_5\in E(G)$ as $G$ is a near triangulation. It follows that $C$ has a cord, giving a contradiction. Hence $d(v)\ge 3$ for every $v\in V(C)$. For $v\in V(G-C)$, if $d(v)< 5$ then $d(v)=3$ or $d(v)=4$ and $G[N(v)]$ gives a separating triangle or $4$-cycle in $G$, a contradiction. Thus (i) holds. 

Now suppose $C$ is the unique special $5$-cycle in $G$. Since $C$ is special, it follows that $n\ge 7$. By (i), we know $d(v)\ge 3$ for each $v\in V(C)$. Assume for contradiction that $d(v)<4$ for some $v\in V(C)$. Without loss of generality, we may assume that $d(v_1)=3$ and let $v_1'$ be the unique neighbor of $v_1$ in $V(G-C)$. Since $G$ is a near triangulation, $v_1'$ is also adjacent to $v_2$ and $v_5$. Let $C':=v_1'v_2v_3v_4v_5v_1'$. Note that $\overline{C'}$ is a near triangulation on $n-1\ge 6$ vertices with no separating triangle or $4$-cycle and its outer cycle $C'$ is a $5$-cycle. Since $d_G(v_1')\geq 5$ and $C'$ is a cycle with no chord, we obtain that $v_1'$ has at least two neighbors in the interior of $C'$. It follows that the interior of $C'$  has at least two vertices and $C'$ is also a special $5$-cycle in $G$, a contradiction. Hence (ii) holds.
\end{proof}
\begin{lemma}\label{lem:minimal_5_cycle}
Let $G$ be an $n$-vertex near triangulation with no separating triangle or $4$-cycle. Suppose its outer cycle $C$ is a $5$-cycle and $n\ge 6$. Then one of the followings holds. 
\begin{itemize}
\item [(i)] $n=6$ and $G=W_6$;
\item [(ii)] $n\ge 11$. Moreover, if $n=11$ then $G=G_{11}$. 
\end{itemize}
\end{lemma}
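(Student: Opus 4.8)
The plan is to combine one global counting identity with the local structure of near-triangulations. First I would note, via Euler's formula, that a near-triangulation on $n$ vertices whose outer face is a $5$-cycle has exactly $3n-8$ edges. Write $k:=n-5$ for the number of interior vertices, let $H=G-V(C)$ (the subgraph induced on the interior vertices), put $i:=|E(H)|$, and let $b$ be the number of $G$-edges joining an interior vertex to a boundary vertex. Since $C$ is chordless (Observation~\ref{obs:int_degree}(i)), $i+b=(3n-8)-5=3k+2$; and since every interior vertex has degree at least $5$ (Observation~\ref{obs:int_degree}(i)), $2i+b=\sum_{v\in V(H)}d_G(v)\ge 5k$. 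Subtracting, $i\ge 2k-2$, while planarity of $H$ gives $i\le\max\{1,\,3k-6\}$. I will also use repeatedly the \emph{trapped-vertex principle}: all of $C$ lies in one face of the induced embedding of $H$, so any vertex of $H$ not incident with that face has no neighbour on $C$, hence has all of its $\ge 5$ neighbours among the other $k-1$ interior vertices.

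For $n=6$ ($k=1$): the unique interior vertex is adjacent to all five vertices of $C$ (every inner face is a triangle containing it), so $G=W_6$ and (i) holds. For $k\in\{2,3\}$ (i.e. $n\in\{7,8\}$) the bound $i\ge 2k-2$ exceeds $\binom{k}{2}$, a contradiction. For $k=4$ we are forced to $i=6$, so $H=K_4$; its interior (trapped) vertex then has degree $\le 3<5$, a contradiction. For $k=5$ we have $i\in\{8,9\}$. If $i=9$ then $H$ is the $5$-vertex triangulation, whose two trapped vertices, each adjacent to all four other interior vertices, would force $|E(H)|\ge 10$. If $i=8$ then, using that every bounded face of $H$ must be a triangle (as $G$ is a near-triangulation and $H$ already contains all interior--interior edges), a face-length count shows $H$ is $2$-connected with its unique non-triangular face being the outer one, a $4$-cycle; this leaves one trapped vertex of degree $\le 4<5$. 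So $n\ne 6$ forces $n\ge 11$, which is the first assertion of (ii).

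For the ``moreover'' part, assume $n=11$ ($k=6$), so $i\in\{10,11,12\}$. If $i=12$, then $H$ is a $6$-vertex triangulation with three trapped vertices each adjacent to all five other interior vertices, forcing $|E(H)|\ge 15$; if $i=11$, the same face-length/trapped-vertex argument gives two trapped vertices each adjacent to all five others, forcing $|E(H)|\ge 13$; both are contradictions. Hence $i=10$, $b=10$, and every interior vertex has degree exactly $5$. Now $H$ has $6$ vertices and $10$ edges; a short degree count rules out a cut vertex, and ``every bounded face is a triangle'' plus a face-length count forces $H$ to be $2$-connected with its only non-triangular face the outer one, necessarily a $5$-cycle. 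The single interior vertex $c$ of $H$ inside this $5$-cycle is trapped, so $d_H(c)=d_G(c)\ge 5$ forces $c$ to be joined to all five outer vertices; thus $H=W_6$. It remains to identify $G$: it is this $W_6$ (rim $5$-cycle $w_1\cdots w_5$, hub $c$) together with the outer $5$-cycle $C$ and the $b=10$ cross edges, with the annulus between $C$ and the rim $5$-cycle triangulated. The degree constraints force each $w_j$ to have exactly two, necessarily consecutive, neighbours on $C$; recording for each rim edge $w_jw_{j+1}$ the apex $v_{\sigma(j)}$ of its annulus triangle yields a map $\sigma:\mathbb{Z}_5\to\mathbb{Z}_5$ with $\sigma(j)-\sigma(j-1)\in\{+1,-1\}$ for all $j$. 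Since these five increments sum to $0$ in $\mathbb{Z}_5$, they are all equal, so $\sigma$ is a rotation and the annulus triangulation is forced to be exactly that of $G_{11}$; hence $G=G_{11}$.

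I expect the main obstacle to be this last case ($n=11$, $i=10$): first pinning down $H=W_6$ (which needs the ``bounded faces are triangles'' claim, the exclusion of a cut vertex in $H$, and the degree-$5$ constraint on $c$), and then the combinatorial argument that the annulus between $C$ and the rim cycle admits only the one symmetric triangulation. All the other cases are comparatively routine once the inequality $i\ge 2k-2$ and the trapped-vertex principle are in place.
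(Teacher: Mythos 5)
Your route is genuinely different from the paper's. The paper never does a case analysis on $n$: it passes to a vertex-minimum example, notes that then the outer cycle is the unique special $5$-cycle, invokes Observation~\ref{obs:int_degree}(ii) to get degree at least $4$ on $C$ and at least $5$ inside, and finishes with the single inequality $4\cdot 5+5(n-5)\le 2(3n-8)$, whose equality case at $n=11$ forces all degrees and hence $G_{11}$. You instead work directly with the interior graph $H=G-V(C)$, the identity $i+b=3k+2$, the bound $i\ge 2k-2$, and a trapped-vertex principle, handling $k=1,\dots,6$ one by one and then reconstructing $G_{11}$ from $H=W_6$ via the rotation map $\sigma$. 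Your counting identities, the trapped-vertex principle, the claim that every bounded face of $H$ is a face of $G$ (hence a triangle), and the final $\sigma$-argument (five increments in $\{\pm1\}$ summing to $0$ in $\mathbb{Z}_5$ must all agree) all check out; in fact your reconstruction of $G_{11}$ is more detailed than the paper's terse ``this implies $G=G_{11}$''. What the paper's approach buys is brevity and reuse of the special-$5$-cycle machinery needed later anyway; what yours buys is a self-contained argument that does not need the minimality reduction or Observation~\ref{obs:int_degree}(ii).

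There are, however, two places where the stated justification is not enough. First, in the $i=10$ case the claim ``a short degree count rules out a cut vertex'' is false as stated: a $5$-vertex planar triangulation with a pendant vertex attached is planar, has $6$ vertices, $10$ edges and a cut vertex, so no count on $H$ alone excludes it. You must use the ambient structure, e.g.\ a pendant edge of $H$ borders the same face on both sides, so that face is the (non-triangular) outer face of $H$; its boundary walk has length $5$ and meets at most four vertices, leaving two trapped vertices that would have to be adjacent to all five others, including the degree-one pendant vertex --- a contradiction (alternatively, the pendant vertex would need four neighbours on $C$, producing a separating $4$-cycle in $G$). Second, in the $i=11$ case the jump to $|E(H)|\ge 13$ tacitly assumes the outer boundary walk of length $4$ consists of four distinct edges; a priori it could traverse a bridge twice. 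This is also patchable (a cut vertex forces $|E(H)|\le\max\{1+3+5,\,9+1\}=10<11$ by planarity, so $H$ is $2$-connected and the outer face is a genuine $4$-cycle), but the patch is needed. Similarly, ``necessarily consecutive'' neighbours on $C$ for each rim vertex follows from the facial (link) structure or the no-separating-$4$-cycle hypothesis, not from degree constraints alone. With these repairs your proof is complete.
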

\begin{proof}Suppose $G$ is an $n$-vertex near triangulation with no separating triangle or $4$-cycle such that its outer cycle $C=v_1v_2v_3v_4v_5v_1$ and $n\ge 6$. By Observation~\ref{obs:int_degree} (i), we know that $v_i$ is not adjacent to any vertex in $V(C)\backslash N_C(v_i)$, $v_i$ has at least one neighbor in $V(G-C)$ for each $i\in [5]$ and that $d(v)\ge 5$ for each $v\in V(G-C)$. If $n=6$, then it is clear that $G=W_6$. Now we assume $n\ge 7$.  We will show that $n\geq 11$. 

Suppose $G$ is vertex-minimum. This implies that $C$ is the unique special $5$-cycle in $G$. It follows from Observation~\ref{obs:int_degree} that for each $i\in [5]$, $v_i$ has degree at least four in $G$ and for every other vertex $v\in V(G-C)$, we have $d(v)\geq 5$.
Moreover, since $G$ is a near triangulation with its outer cycle being a $5$-cycle, we have that $|E(G)| = 3n-8$. It follows that 
\begin{equation}\label{eq:degree_sum}
 4\cdot 5 + 5\cdot (n-5) \leq \sum_{v\in V(C)} d(v) + \sum_{v\in V(G-C)} d(v) = \sum_{v\in V(G)} d(v)  = 2(3n-8),   
\end{equation}
which implies that $n\geq 11$. Furthermore, when $n=11$, the inequality in \eqref{eq:degree_sum} must be an equality. Hence we have $d(v_i) = 4$ for all $i\in [5]$, and $d(v)=5$ for all $v\in V(G-C)$. This implies that $G=G_{11}$. 
\end{proof}

\begin{observation}\label{obs:non-adjacent_path_length}
    Let $G$ be a near triangulation with its outer cycle $C:=v_1v_2v_3v_4v_5v_1$ being a $5$-cycle. Suppose that $G$ has no separating triangle or $4$-cycle and that $C$ is the unique special cycle in $G$. Then for any two non-adjacent vertices $u,v$ in $C$, every $uv$-path $P$ with $V(P)\backslash\{u,v\} \subseteq V(G-C)$ has length at least three.
\end{observation}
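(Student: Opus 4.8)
The plan is to argue by contradiction: suppose some $uv$-path $P$ with $V(P)\setminus\{u,v\}\subseteq V(G-C)$ has length at most two, and extract from it a vertex of $C$ whose degree is too small to be allowed. First I would rule out length one. Since $u$ and $v$ are non-adjacent vertices of $C$, Observation~\ref{obs:int_degree}(i) gives $G[V(C)]=C$, so $uv\notin E(G)$; hence $P$ cannot have length one, and it must have length exactly two, say $P=uwv$ with $w\in V(G-C)$. As any two non-adjacent vertices of the $5$-cycle $C$ are at distance two along $C$, after relabelling I may assume $u=v_1$ and $v=v_3$. Then $v_1w,wv_3\in E(G)$, so $D:=v_1v_2v_3wv_1$ is a $4$-cycle of $G$ on four distinct vertices.

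The heart of the argument is to show that $\overline{D}$ is simply $D$ together with one chord. Taking $\overline{D}$ on the side of $D$ not containing the outer face, the vertices $v_4$ and $v_5$ are incident with the outer face and do not lie on $D$, so they lie strictly outside $D$; thus $\textrm{Ext}_G(D)\neq\emptyset$. Since in a plane graph no edge joins $\textrm{Int}_G(D)$ to $\textrm{Ext}_G(D)$, if $\textrm{Int}_G(D)$ contained a vertex then $D$ would be a separating $4$-cycle, contrary to hypothesis. Hence $\overline{D}$ is a near triangulation whose outer cycle is the $4$-cycle $D$ and which has no interior vertex, which forces $\overline{D}$ to be $D$ plus exactly one chord. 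The two candidate chords are $v_1v_3$ and $v_2w$, and $v_1v_3\notin E(G)$ because $G[V(C)]=C$; therefore $v_2w\in E(G)$ and $\overline{D}$ consists precisely of the two triangular faces $v_1v_2w$ and $wv_2v_3$.

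To finish I would read off the degree of $v_2$. Every edge of $C$ is incident with the outer face, so at $v_2$ each of $v_1v_2$ and $v_2v_3$ has the outer face on one side and one of the triangles $v_1v_2w$, $wv_2v_3$ on the other; together with the chord $v_2w$ this accounts for all edges and faces around $v_2$, so $d_G(v_2)=3$. But $C$ is the unique special $5$-cycle in $G$, so Observation~\ref{obs:int_degree}(ii) gives $d_G(v_2)\geq 4$, a contradiction, which completes the proof.

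I expect the only mildly delicate step to be the planar bookkeeping in the second paragraph — verifying that $D$ bounds a disc disjoint from $v_4$ and $v_5$, so that the absence of separating $4$-cycles genuinely forces $\textrm{Int}_G(D)=\emptyset$; once the convention for $\overline{D}$ is fixed, everything else is a short local face count together with the two cited parts of Observation~\ref{obs:int_degree}.
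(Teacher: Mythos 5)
Your proof is correct and follows essentially the same route as the paper's: rule out a chord via Observation~\ref{obs:int_degree}(i), use the absence of separating $4$-cycles together with the near-triangulation property to force the chord $v_2w$ inside the $4$-cycle $v_1v_2v_3wv_1$, and conclude $d_G(v_2)=3$, contradicting Observation~\ref{obs:int_degree}(ii). You merely spell out the planar bookkeeping (emptiness of $\textrm{Int}_G(D)$ and the face count at $v_2$) that the paper leaves implicit.
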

\begin{proof}
By Observation \ref{obs:int_degree}, $C$ is a cycle with no chord in $G$. Hence suppose for contradiction and without loss of generality that there exists some vertex $x \in V(G-C)$ such that $v_1 x v_3$ is a path of length two in $G$. Then $xv_2\in E(G)$ since $G$ is near triangulation and contains no separating $4$-cycle. This implies that $d(v_2)=3$, contradicting (ii) of Observation \ref{obs:int_degree}.
\end{proof}

\section{Number of $5$-cycles in $5$-connected planar triangulations}\label{sec:5-cycle}

In this section, we prove Theorem \ref{thm:5-cycle-5-connected}. For convenience, let $N_t(C_5,G)$ and $N_s(C_5, G)$ denote the number of trivial and non-trivial separating $5$-cycles in a plane triangulation $G$ respectively.

\begin{proof}[Proof of Theorem \ref{thm:5-cycle-5-connected}]
Let $G$ be a $5$-connected plane triangulation on $n$ vertices. We show Theorem \ref{thm:5-cycle-5-connected} by induction on $n$. The values of $N_5(C_5,n)$ for $n\leq 20$ are determined by computer search\footnote{The code can be accessed in \url{https://github.com/wzy3210/cycles_in_planar_triangulations}.}. We may now assume that $n\geq 21$. Observe that any $5$-cycle $C$ in $G$ is either a separating $5$-cycle, or a non-separating $5$-cycle. If it is non-separating, then either $\overline{C}$ or $G-\textrm{Int}_G(C)$ is outerplanar, i.e., $C$ is obtained from `gluing' two facial triangles on two sides of another facial triangle. It follows that the number of non-separating $5$-cycles is exactly
$(2n-4)\cdot \binom{3}{2} = 6n-12$. Therefore, to show Theorem \ref{thm:5-cycle-5-connected}, it suffices to show that the number of separating $5$-cycles in $G$ is at most $(9n-50)-(6n-12)=3n-38$.

If $G$ contains no non-trivial separating $5$-cycles, then we are done, as the number of trivial separating $5$-cycles, which is the number of degree $5$ vertices in $G$, is at most $n< 3n-38$ (since $n\ge 21$). Otherwise, let $C:= v_1v_2v_3v_4v_5v_1$ be a non-trivial separating $5$-cycle in $G$ such that $\overline{C}$ is vertex-minimum. Let $H:=\overline{C}$. By Lemma \ref{lem:minimal_5_cycle}, $|V(H)|\geq 11$, and $|V(H)|=11$ if and only if $H \cong G_{11}$. Note that $C$ is the unique special $5$-cycle in $H$ by the choice of $C$.\\

\textbf{Case 1}: Suppose $|V(H)|\geq 12$, i.e., $H\ne G_{11}$.  Let $G'$ be obtained from $G$ by replacing the interior of $C$ with the interior of $G_{11}$ such that $C\cup \textrm{Int}_{G'}(C)$ induces $G_{11}$ in $G'$ (i.e., replacing $H$ by $G_{11}$). Let $F=G'[C\cup \textrm{Int}_{G'}(C)]$ and then we know that $F$ is a near triangulation with outer cycle $C$ and $F\cong G_{11}$. Note that $|V(G')|\le n-1$ as $|V(H))|\ge 12>|V(F)|=11$. We claim that $G'$ is still $5$-connected. Suppose otherwise $G'$ has a separating triangle or $4$-cycle, say $D$. Since both $G$ and $F=G_{11}$ have no separating triangle or $4$-cycle, $D$ must use a vertex in $\text{Int}_{G'}(C)$ and a vertex in $\textrm{Ext}_{G'}(C)$. Thus $D$ must be a $4$-cycle; and we know that $D$ has to use two non-adjacent vertices in $C$, otherwise those two adjacent vertices in $C$ with a vertex in $\textrm{Int}_{G'}(C)$ or $\textrm{Ext}_{G'}(C)=\textrm{Ext}_{G}(C)$ is a separating triangle in $F=G_{11}$ or $G$. Note that $C$ is also the unique special $5$-cycle in $F$. It follows from Observation~\ref{obs:non-adjacent_path_length} that for any two non-adjacent vertices $u,v$ in $C$, there exists no $uv$-path of length two in $F$ such that all its internal vertices are contained in $V(F-C)$. Hence $D$ cannot exist, and it follows that $G'$ is $5$-connected. Let $n':=|V(G')|$. $G'$ has at most $3n'-38$ separating $5$-cycles  by induction if $n'\geq 20$, and note that $N_s(C_5,G')$ is at most the value that is listed in the third column of Table \ref{thm:5-cycle-5-connected} for each $n'\le 19$. Now we bound the number of separating $5$-cycles in $G$. We first bound the number of trivial separating $5$-cycles in $G$, i.e., the number of degree $5$-vertices in $G$. Observe that
$$|\{ v\in V(\textrm{Ext}_{G}(C)): d_G(v)=5\}| = |\{ v\in V(\textrm{Ext}_{G'}(C)): d_{G'}(v)=5\}|.$$
Since $C$ is the unique special $5$-cycle in $H$, every vertex in $V(C)$ has at least two neighbors in $V(H-C)$ by (ii) of Observation~\ref{obs:int_degree}. Note that every vertex in $V(C)$ has degree exactly four in $F=G_{11}$, i.e., $v$ has exactly two neighbors in $V(F-C)$. Hence for any $v\in V(C)$, we have $d_{G}(v)\ge d_{G'}(v)\ge 5$ and this implies that $\{v\in V(C): d_G(v)=5\}\subseteq \{v\in V(C): d_{G'}(v)=5\}$. It follows that 
$$|\{ v\in V(C): d_G(v)=5\}|\leq |\{ v\in V(C): d_{G'}(v)=5\}|.$$
Moreover, every vertex in $V(F-C)$ has degree exactly $5$ in $G'$. 
Hence we have
\begin{align*}
N_t(C_5,G) & =|\{ v\in V(G): d_G(v)=5\}|\\
&\leq |v\in V(G'): d_{G'}(v)=5|+|\{ v\in V(\textrm{Int}_{G}(C)): d_G(v)=5\}|  -|V(\textrm{Int}_{G'}(C))|\\&
\le |v\in V(G'): d_{G'}(v)=5|+|V(\textrm{Int}_{G}(C))| -|V(\textrm{Int}_{G'}(C))|\\
&=  N_t(C_5, G')+(n-n').
\end{align*}

Next we consider non-trivial separating $5$-cycles in $G$.  It is clear that a non-trivial separating $5$-cycle of $G$ that has no vertex in $V(H-C)$ is also a  non-trivial separating $5$-cycle of $G'$ with no vertex in $V(F-C)$. For non-trivial separating $5$-cycles in $G$ using both vertices in the interior and exterior of $C$, we claim that there is no such cycle in $G$. Suppose $D$ is a non-trivial separating $5$-cycle using vertices in $\textrm{Int}_{G}(C)$ and $\textrm{Ext}_{G}(C)$. Similarly, we see that $D$ uses two non-adjacent vertices in $C$ and we may assume that $v_2, v_5\in V(D)$. Moreover, since $C$ is the unique special $5$-cycle in $H$, we know that every $v_2v_5$-path $P$ such that $V(P)\backslash\{v_2,v_5\} \subseteq V(H-C)=V(\textrm{Int}_{G}(C))$ has length at least three by Observation~\ref{obs:non-adjacent_path_length}. It follows that $D$ should use two vertices in $\textrm{Int}_{G}(C)$, say $u,v$, and exactly one vertex in $\textrm{Ext}_{G}(C)$, say $w$. Hence we may assume that $D=v_2uvv_5wv_2$. Since $G$ is $5$-connected, $w$ should be adjacent to $v_1$. Let $D_1=v_2uvv_5v_1v_2$. Since $D$ is non-trivial, observe that the interior of $D_1$ contains at least two neighbors of $v_1$ and so $D_1$ is also a non-trivial separating $5$-cycle in $G$. But we know that $|V(\overline{D_1})|<|V(\overline{C})|$, contradicting the minimality of $\overline{C}$. Hence $G$ has no non-trivial separating $5$-cycle using both vertices in the interior and exterior of $C$. Observe that both the closed interiors of $C$ in $G$ and $G'$, i.e., $H$ and $F$, contain exactly one non-trivial separating $5$-cycle of $G$, which is $C$. Thus we have that 
$$N_s(C_5,G) = N_s(C_5, G').$$ Therefore, the number of separating $5$-cycles in $G$ is  
$$N_t(C_5, G)+N_s(C_5,G)\le N_t(C_5,G')+N_s(C_5,G')+(n-n').$$
Recall that if $n'\geq 20$, it follows from induction that $G'$ has at most $(3n'-38)$ separating $5$-cycles, i.e., $N_t(C_5,G')+N_s(C_5,G')\le 3n'-38$. Hence, 
\begin{align*}
    N_t(C_5,G)+N_s(C_5,G)  
    \leq (3n'-38) + (n-n')
    =n+2n'-38 <3n-38,
\end{align*}
and this implies that $G$ has less than $(3n-38)$ separating $5$-cycles if $n'\ge 20$. Otherwise, suppose $n'\leq 19$. By checking every value of the maximum number of separating $5$-cycles in Table \ref{table:small_values}, we see that the number of separating $5$-cycles in $G$ is also less than $3n-38$. Therefore, in this case, $G$ has less than $(3n-38)$ separating $5$-cycles and hence, less than $(9n-50)$ many $5$-cycles.\\

\textbf{Case 2}: $|V(H)|=11$, i.e.,  $H=G_{11}$. Let $H_0:=H$. If there exists a vertex $u_1\in V(G)\backslash V(H_0)$ adjacent to three consecutive vertices of $C_0=C$, 
then among all such choices of $u_1$ we pick a vertex $u_1$ that maximizes the number of special $5$-cycles in $H_1:= G[V(H_0)\cup\{u_1\}]$. Let
$C_1$ denote its outer cycle (which is also a $5$-cycle). Similarly, for each $H_i$ with $i\geq 1$, if there exists a vertex $u_{i+1}\in V(G)\backslash V(H_i)$ adjacent to three consecutive vertices of $C_i$, we also pick a vertex $u_{i+1}$ such that  $H_{i+1}:= G[V(H_i)\cup \{u_{i+1}\}]$ has maximum number of special $5$-cycles among all choices of $u_{i+1}$.
Let $C_{i+1}$ denote the outer cycle of $H_{i+1}$. We repeat the above process until we cannot find such vertex. We then obtain a sequence of near triangulations $H_0\subseteq H_1\subseteq \cdots \subseteq H_t\subseteq G$ with $u_1, u_2, \cdots, u_t$ being the new vertices added in each step for $t\geq 1$. 

\begin{claim}\label{cl:cloning}
\begin{itemize}
\item [(i)] $H_t$ is a near triangulation on $11+t$ vertices with no separating triangle or $4$-cycle and its outer cycle is a $5$-cycle; 
\item [(ii)] and the number of special $5$-cycles in $H_t$ is at most $2t$ for $t\ge 1$.
\end{itemize}   
\end{claim}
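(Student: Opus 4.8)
The plan is to prove both statements by induction along the chain $H_0\subseteq H_1\subseteq\cdots\subseteq H_t$, exploiting that $H_{i+1}$ is obtained from $H_i$ by a very restricted local move. Throughout, write $C_i=w_1w_2w_3w_4w_5w_1$ for the outer cycle of $H_i$ with the $w_j$ in cyclic order, and assume $u_{i+1}$ is adjacent to the three consecutive vertices $w_1,w_2,w_3$.

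For part (i) I would show by induction on $i$ that $H_i$ equals $\overline{C_i}$, is a near triangulation on $11+i$ vertices with outer cycle the $5$-cycle $C_i$, and has no separating triangle or $4$-cycle; the base case $i=0$ is the hypothesis $H_0=G_{11}$. In the inductive step, the first point is that $u_{i+1}$ is adjacent to \emph{exactly} three consecutive vertices of $C_i$. By Observation~\ref{obs:int_degree}(i), $C_i$ is chordless. Since $u_{i+1}\in\textrm{Ext}_G(C_i)$ while $\textrm{Int}_G(C_i)=\textrm{Int}_{H_i}(C_i)\ne\emptyset$, planarity rules out $u_{i+1}$ being adjacent to all five $w_j$ (the exterior of $C_i$ would then be tiled by five bounded triangles at $u_{i+1}$, leaving no unbounded face), so $u_{i+1}$ has at most four neighbours on $C_i$; moreover if $u_{i+1}\sim w_j,w_{j+2}$ but $u_{i+1}\not\sim w_{j+1}$ then $w_ju_{i+1}w_{j+2}w_{j+1}w_j$ is a chordless $4$-cycle of $G$ whose side toward $w_{j+1}$ must be empty (the other side contains $\textrm{Int}_G(C_i)$), forcing the chord $u_{i+1}w_{j+1}$, a contradiction. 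Hence $N_G(u_{i+1})\cap V(C_i)$ is a ``gapless'' subset of the $5$-cycle of size $3$ or $4$, which must be exactly three consecutive vertices. The no-separating-triangle condition applied to $u_{i+1}w_1w_2$ and $u_{i+1}w_2w_3$ then shows these are faces, so $H_{i+1}=\overline{C_{i+1}}$ is a near triangulation on $11+i+1$ vertices with outer cycle the $5$-cycle $C_{i+1}=w_1u_{i+1}w_3w_4w_5w_1$. Finally, $H_{i+1}$ has no separating triangle or $4$-cycle: one through $u_{i+1}$ is impossible because $d_{H_{i+1}}(u_{i+1})=3$ with $N(u_{i+1})=\{w_1,w_2,w_3\}$ inducing the path $w_1w_2w_3$, so the only short cycles through $u_{i+1}$ are the two incident faces and chorded $4$-cycles (using that in the $5$-connected $G$ the pair $w_1,w_3$ has exactly the two common neighbours $w_2,u_{i+1}$); and one avoiding $u_{i+1}$ lies in $H_i$ and is still separating there (adding $u_{i+1}$ and its edges cannot reconnect $H_i-V(D)$, and the only way $u_{i+1}$ could become isolated would need $\{w_1,w_2,w_3\}\subseteq V(D)$, which is impossible since in $H_i$ the vertices $w_1,w_3$ have the single common neighbour $w_2$)---contradicting the inductive hypothesis.

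For part (ii) I would let $s_i$ denote the number of special $5$-cycles of $H_i$ and prove $s_1\le 2$ together with $s_{i+1}\le s_i+2$ for $i\ge 1$, which yields $s_t\le 2t$. The crux is that $u_{i+1}$ lies in the outer face of $H_i$, hence in the exterior of every cycle of $H_i$; therefore $\textrm{Int}_{H_{i+1}}(C')=\textrm{Int}_{H_i}(C')$ for every $5$-cycle $C'\subseteq H_i$, so $C'$ is special in $H_{i+1}$ iff it is special in $H_i$, and consequently $s_{i+1}$ equals $s_i$ plus the number of special $5$-cycles of $H_{i+1}$ through $u_{i+1}$. One such cycle is always $C_{i+1}$ (whose interior is $\textrm{Int}_{H_i}(C_i)\cup\{w_2\}$), and the remaining task is to show at most one other $5$-cycle through $u_{i+1}$ is special. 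A $5$-cycle through $u_{i+1}$ uses two of $w_1,w_2,w_3$ as the cycle-neighbours of $u_{i+1}$; going through the three cases and using that $C_{i+1}$ is the outer cycle (so the rest of such a cycle lies in $\overline{C_{i+1}}$), the chordlessness of $C_{i+1}$, and the common-neighbour bound, one shows that any special $5$-cycle through $u_{i+1}$ other than $C_{i+1}$ must enclose one specific small region near the arc $w_1w_2w_3$, of which there is at most one. The base case $s_1\le 2$ I would settle by direct inspection of $G_{11}$: it has a unique special $5$-cycle, its outer cycle $C_0$, and attaching $u_1$ to three consecutive vertices of $C_0$ produces exactly one more special $5$-cycle, namely $C_1$.

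The step I expect to be the main obstacle is the bound ``at most two special $5$-cycles through $u_{i+1}$'' in part (ii). Unlike part (i), this does not reduce to the inductive hypothesis: one must understand, for each way a $5$-cycle can pass through the degree-$3$ vertex $u_{i+1}$, exactly which bounded regions of $H_{i+1}$ can occur as its interior, and argue that---besides the cap giving $C_{i+1}$---at most one of these contains two or more vertices. This is where the rigidity of $G_{11}$ and the ``two common neighbours'' consequence of $5$-connectivity must be combined carefully; it is also plausibly the only place where the maximality in the choice of $u_{i+1}$ might enter, though a bound uniform over all valid choices of $u_{i+1}$ suffices for the claim as stated.
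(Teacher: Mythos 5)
Your reduction of part (ii) is the same one the paper uses: writing the count as (special $5$-cycles of $H_i$) plus (special $5$-cycles of $H_{i+1}$ through $u_{i+1}$), noting $C_{i+1}$ is always one of the latter, and aiming to show there is at most one other; in the paper's notation this is exactly $\ell_0+\ell_1=2$ and $\ell_i\le 2$ for $i\ge 2$. Part (i) is fine (the paper dismisses it in one line, and your more careful discussion of why $u_{i+1}$ meets $C_i$ in exactly three consecutive vertices is in the right spirit). The problem is that the bound $\ell_{i+1}\le 2$ \emph{is} the claim, and you have not proved it: you explicitly defer it as ``the main obstacle'' and offer only a sketch (``any special $5$-cycle through $u_{i+1}$ other than $C_{i+1}$ must enclose one specific small region, of which there is at most one'') with no argument. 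Note that a special $5$-cycle through $u_{i+1}$ corresponds to an $a_id_i$-path of length \emph{three} in $H_i$ cutting off at least one vertex besides $x_i$, so the two-common-neighbour property does not directly cap their number, and nothing in your outline does.

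Moreover, the route you hope for --- a bound local to step $i+1$ and uniform over all admissible choices of $u_{i+1}$ --- is not how the paper proves it, and it is far from clear that such a local argument exists. The paper argues by taking $p$ minimal with $\ell_{p+1}\ge 3$, analysing which vertex of $C_p$ the \emph{previously} added vertex $u_p$ is (the cases $u_p\in\{a_p,d_p\}$ and then $u_{p-1}=d_p$ are killed by producing three common neighbours of two vertices, i.e.\ a separating $4$-cycle), and then, crucially, invoking the maximality in the selection rule: the $\ell_{p+1}-1$ special cycles through $u_{p+1}$ other than $C_{p+1}$ avoid $u_p$, so $u_{p+1}$ was itself an admissible candidate at step $p$, whence $\ell_p\ge\ell_{p+1}-1\ge 2$ by the greedy choice, and this forces $\ell_p=2$, which is then contradicted through a further analysis of $u_{p-1}\in\{a_p,c_p\}$. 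In other words, the proof uses the history $(u_{p-1},u_p)$ of the construction and the maximizing choice of the $u_i$'s in an essential way; it does not establish (and the claim may not even hold) a choice-independent bound at a single step. This missing argument is a genuine gap in your proposal, and filling it is precisely the hard content of Claim~\ref{cl:cloning}(ii).
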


\begin{proof}Clearly (i) holds since $H_t$ is a subgraph of $G$, which is $5$-connected. We will show (ii) in the remaining proof.
Observe that $H_1$ is unique by symmetry and it is not hard to check that $H_1$ has exactly two special $5$-cycles which are $C_0$ (the outer cycle of $H_0$) and $C_1$ (the outer cycle of $H_1$). Hence we may assume $t\ge 2$. Recall that for each $0\leq i\leq t$, $C_i$ denotes the outer $5$-cycle of $H_i$. Suppose $C_i=x_ia_ib_ic_i d_ix_i$ (in the clockwise order) for each $i$ with $0\le i\le t-1$ such that $u_{i+1}$ is adjacent to $x_i, a_i, d_i$ in $H_{i+1}$. Then $C_i=u_ia_{i-1}b_{i-1}c_{i-1}d_{i-1}u_i$ for each $i\in [t]$. Observe that for each $i\in [t]$ and a special $5$-cycle $D$ in $H_i$, either $D$ is a special $5$-cycle in $H_{i-1}$ or $D$ contains the vertex $u_i$. For each $i\in [t]$, let $\ell_i$ denote the number of special $5$-cycles containing $u_i$ in $H_i$. Moreover, let $\ell_0$ denote the number of special $5$-cycles in $H_0=G_{11}$. Then it follows that
$H_t$ has exactly $\sum_{i=0}^t \ell_i$ special $5$-cycles. 

\begin{figure}[htb]
\begin{center}
      \resizebox{4cm}{!}{\begin{tikzpicture}[scale=1, wvertex/.style={circle, draw=red, fill=red, scale=0.2}, bvertex/.style={circle, draw=black, fill=black, scale=0.2},rvertex/.style={circle, draw=red, fill=red, scale=0.2}, sbvertex/.style={circle, draw=black, fill=black, scale=0.1}] 

\foreach \i in {1,...,5} {
  \node [bvertex] (A\i) at (72*\i+18:1.5) {}; 
}

\node at ($(A1)+ (0,-0.5)$) {$x_{i-1}$};
\node at ($(A2)+ (-0.4,0)$) {$d_{i-1}$};
\node at ($(A3)+ (-0.4,-0.1)$) {$c_{i-1}$};
\node at ($(A4)+ (0.4,-0.1)$) {$b_{i-1}$};
\node at ($(A5)+ (0.4,0)$) {$a_{i-1}$};

  \node [bvertex] (u1) at ($(A1)+(0,0.8)$) {};
  \node at ($(u1)+(0.3,0.1)$) {$u_{i}$};

  \draw (u1)--(A1);
  \draw (u1)--(A2);
  \draw (u1)--(A5);

\draw (A1) -- (A2) -- (A3) -- (A4) -- (A5) -- (A1);

\node [sbvertex] at (0,0.1){};
\node [sbvertex] at (0.2,0.1){};
\node [sbvertex] at (-0.2,0.1){};
\end{tikzpicture}}
      \caption{$H_i$ with outer cycle $C_i=u_ia_{i-1}b_{i-1}c_{i-1}d_{i-1}u_i$ for $i\in [t]$}
      \label{fig:Ci}
\end{center} 
\end{figure}
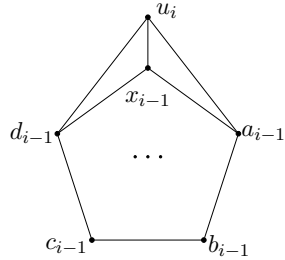
Recall that $C_i=u_ia_{i-1}b_{i-1}c_{i-1}d_{i-1}u_i$ for each $i\in [t]$. We first show that for each $i\in [t]$, every special $5$-cycle $D$ in $H_i$ such that $D$ contains the vertex $u_i$ must use the edges $u_ia_{i-1}, u_id_{i-1}$.  Suppose not and then $u_ix_{i-1}\in E(D)$. Since $D$ is special in $H_i$, observe that $D-u_i$ induces a separating $4$-cycle in $G$, giving a contradiction. Therefore, $u_ia_{i-1}, u_ib_{i-1}\in E(D)$. Now we consider the value of $\ell_i$ for each $i\in \{0, 1, \ldots, t\}$. We know that $\ell_0=1$ (since the outer cycle $C_0$ of $H_0=G_{11}$ is the only special $5$-cycle in $H_0$) and $\ell_1=1$ (since the outer cycle $C_1$ of $H_1$ is the only special $5$-cycle in $H_1$ containing $u_1$). Moreover, observe that for each $i\in [t]$, we have $\ell_i\ge 1$ as $C_i$ is a special $5$-cycle containing $u_i$ in $H_i$.  We claim that $\ell_i\le 2$ for any $i\in [2,t]$. We first show that $\ell_2 \le 2$.   
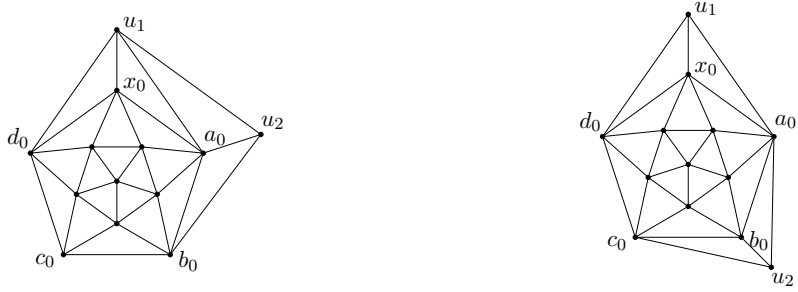
\begin{figure}[htb]
  \centering
  \begin{minipage}{.3\textwidth}
    \centering
     \resizebox{4cm}{!}{\begin{tikzpicture}[scale=1, wvertex/.style={circle, draw=red, fill=red, scale=0.2}, bvertex/.style={circle, draw=black, fill=black, scale=0.2},rvertex/.style={circle, draw=red, fill=red, scale=0.2}, sbvertex/.style={circle, draw=black, fill=black, scale=0.1}] 

\foreach \i in {1,...,5} {
  \node [bvertex] (A\i) at (72*\i+18:1.5) {}; 
}

\node at ($(A1)+ (0.3,0.1)$) {$x_0$};
\node at ($(A2)+ (-0.2,0.25)$) {$d_0$};
\node at ($(A3)+ (-0.3,-0.1)$) {$c_0$};
\node at ($(A4)+ (0.3,-0.1)$) {$b_0$};
\node at ($(A5)+ (0.2,0.25)$) {$a_0$};

  \node [bvertex] (u1) at ($(A1)+(0,1)$) {};
  \node at ($(u1)+(0.3,0.1)$) {$u_1$};

  \draw (u1)--(A1);
  \draw (u1)--(A2);
  \draw (u1)--(A5);

  \node [bvertex] (u2) at (72*5+18:2.5) {};
  \node at ($(u2)+(0.2,0.2)$) {$u_2$};
  \draw (u2)--(u1);
  \draw (u2)--(A5);
  \draw (u2)--(A4);
\draw (A1) -- (A2) -- (A3) -- (A4) -- (A5) -- (A1);

\foreach \i in {1,...,5} {
  \node [bvertex] (B\i) at (72*\i+18+180:0.7) {}; 
}

\draw (B1) -- (B2) -- (B3) -- (B4) -- (B5) -- (B1);

\node [bvertex] (C) at (0,0) {};

\draw (C) -- (B1);
\draw (C) -- (B2);
\draw (C) -- (B3);
\draw (C) -- (B4);
\draw (C) -- (B5);

\draw (A3) -- (B1) --(A4) -- (B2) -- (A5) --(B3) --(A1) -- (B4) --(A2) --(B5) --(A3);

\end{tikzpicture}}
  \end{minipage}
  \hspace{2cm}
  \begin{minipage}{.3\textwidth}
    \centering
   \resizebox{3.2cm}{!}{\begin{tikzpicture}[scale=1, wvertex/.style={circle, draw=red, fill=red, scale=0.2}, bvertex/.style={circle, draw=black, fill=black, scale=0.2},rvertex/.style={circle, draw=red, fill=red, scale=0.2}, sbvertex/.style={circle, draw=black, fill=black, scale=0.1}] 

\foreach \i in {1,...,5} {
  \node [bvertex] (A\i) at (72*\i+18:1.5) {}; 
}

\node at ($(A1)+ (0.3,0.1)$) {$x_0$};
\node at ($(A2)+ (-0.2,0.25)$) {$d_0$};
\node at ($(A3)+ (-0.3,-0.1)$) {$c_0$};
\node at ($(A4)+ (0.3,-0.05)$) {$b_0$};
\node at ($(A5)+ (0.2,0.25)$) {$a_0$};

  \node [bvertex] (u1) at ($(A1)+(0,1)$) {};
  \node at ($(u1)+(0.3,0.1)$) {$u_1$};

  \draw (u1)--(A1);
  \draw (u1)--(A2);
  \draw (u1)--(A5);

  \node [bvertex] (u2) at ($(A4)+(0.5,-0.5)$) {};
  \node at ($(u2)+(0.2,-0.2)$) {$u_2$};
  \draw (u2)--(A3);
  \draw (u2)--(A5);
  \draw (u2)--(A4);
\draw (A1) -- (A2) -- (A3) -- (A4) -- (A5) -- (A1);

\foreach \i in {1,...,5} {
  \node [bvertex] (B\i) at (72*\i+18+180:0.7) {}; 
}

\draw (B1) -- (B2) -- (B3) -- (B4) -- (B5) -- (B1);

\node [bvertex] (C) at (0,0) {};

\draw (C) -- (B1);
\draw (C) -- (B2);
\draw (C) -- (B3);
\draw (C) -- (B4);
\draw (C) -- (B5);

\draw (A3) -- (B1) --(A4) -- (B2) -- (A5) --(B3) --(A1) -- (B4) --(A2) --(B5) --(A3);

\end{tikzpicture}}
  \end{minipage}
  \caption{Case $x_1=a_0$ (left) and Case $x_1=b_0$ (right)}
  \label{fig:case-x1}
\end{figure}

Let $D$ be a special $5$-cycle containing $u_2$ in $H_2$ such that $D\ne C_2$ (if $D$ exists). Note that $C_2=u_2a_1b_1c_1d_1u_2$ is the outer cycle of $H_2$, $N_{H_2}(u_2)=\{x_1, a_1, d_1\}$ and $D$ should use the edges $u_2a_1, u_2d_1$. Recall that $C_1=u_1a_0b_0c_0d_0u_1$ is the outer cycle of $H_1$ and $d_{H_1}(u_1)=3$.  Hence, we know that $x_1\in \{ a_0, b_0, c_0, d_0\}$. Assume that $x_1\in \{a_0, d_0\}$ and without loss of generality we may assume $x_1=a_0$ (See Figure~\ref{fig:case-x1}). Note that $(b_0, u_1)=(a_1,d_1)$ and thus $u_2b_0, u_2u_1$ are two edges in $D$. Similarly we know that $u_1a_0\notin E(D)$. We show that $u_1d_0\in E(D)$. Assume for contradiction that $u_1x_0\in E(C)$. Observe that $x_0a_0b_0$ is the unique $x_0b_0$-path of length two in $H_2$. Hence $D$ must be $u_2b_0a_0x_0u_1u_2$, which is impossible as $D$ is special. Thus $u_1d_0\in E(D)$. Since $D\ne C_2$, there exists a $b_0d_0$-path of length two that is different from $b_0c_0d_0$, which is impossible in $H_2$. Hence $D$ does not exist and $\ell_2=1$ if $x_1\in \{a_0, d_0\}$. We may now assume $x_1\in \{b_0, c_0\}$ and may assume that $x_1=b_0$ (See Figure~\ref{fig:case-x1}). We know that $D$ contains the edges $u_2a_0, u_2c_0$. It is not hard to check (with similar analysis as before) that $D$ can only be $u_2c_0d_0x_0a_0u_2$. This implies that $\ell_2=2$ if $x_1\in \{b_0,c_0\}$. Therefore, $\ell_2\le 2$.

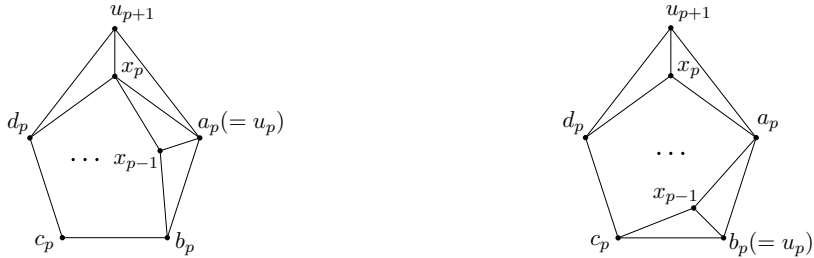
\begin{figure}[htb]
  \centering
  \begin{minipage}{.3\textwidth}
    \centering
     \resizebox{4cm}{!}{\begin{tikzpicture}[scale=1, wvertex/.style={circle, draw=red, fill=red, scale=0.2}, bvertex/.style={circle, draw=black, fill=black, scale=0.2},rvertex/.style={circle, draw=red, fill=red, scale=0.2}, sbvertex/.style={circle, draw=black, fill=black, scale=0.1}] 

\foreach \i in {1,...,5} {
  \node [bvertex] (A\i) at (72*\i+18:1.5) {}; 
}

\node at ($(A1)+ (0.3,0.1)$) {$x_p$};
\node at ($(A2)+ (-0.2,0.25)$) {$d_p$};
\node at ($(A3)+ (-0.3,-0.1)$) {$c_p$};
\node at ($(A4)+ (0.3,-0.1)$) {$b_p$};
\node at ($(A5)+ (0.7,0.25)$) {$a_p(=u_p)$};

  \node [bvertex] (u1) at ($(A1)+(0,0.8)$) {};
  \node at ($(u1)+(0.3,0.25)$) {$u_{p+1}$};

  \draw (u1)--(A1);
  \draw (u1)--(A2);
  \draw (u1)--(A5);

  \node [bvertex] (u2) at (72*5+18:0.8) {};
  \node at ($(u2)+(-0.4,-0.2)$) {$x_{p-1}$};
  \draw (u2)--(A1);
  \draw (u2)--(A5);
  \draw (u2)--(A4);
\draw (A1) -- (A2) -- (A3) -- (A4) -- (A5) -- (A1);

\node [sbvertex] at (-0.5,0.1) {};
\node [sbvertex] at (-0.7,0.1) {};
\node [sbvertex] at (-0.3,0.1) {};
\end{tikzpicture}}
  \end{minipage}
  \hspace{2cm}
  \begin{minipage}{.3\textwidth}
    \centering
   \resizebox{3.65cm}{!}{\begin{tikzpicture}[scale=1, wvertex/.style={circle, draw=red, fill=red, scale=0.2}, bvertex/.style={circle, draw=black, fill=black, scale=0.2},rvertex/.style={circle, draw=red, fill=red, scale=0.2}, sbvertex/.style={circle, draw=black, fill=black, scale=0.1}] 

\foreach \i in {1,...,5} {
  \node [bvertex] (A\i) at (72*\i+18:1.5) {}; 
}

\node at ($(A1)+ (0.3,0.1)$) {$x_p$};
\node at ($(A2)+ (-0.2,0.25)$) {$d_p$};
\node at ($(A3)+ (-0.3,-0.1)$) {$c_p$};
\node at ($(A4)+ (0.8,-0.1)$) {$b_p(=u_p)$};
\node at ($(A5)+ (0.2,0.25)$) {$a_p$};

  \node [bvertex] (u1) at ($(A1)+(0,0.8)$) {};
  \node at ($(u1)+(0.3,0.25)$) {$u_{p+1}$};

  \draw (u1)--(A1);
  \draw (u1)--(A2);
  \draw (u1)--(A5);

   \node [bvertex] (u2) at ($(A4)+(-0.5,0.5)$) {};
  \node at ($(u2)+(-0.3,0.2)$) {$x_{p-1}$};
  \draw (u2)--(A3);
  \draw (u2)--(A5);
  \draw (u2)--(A4);
\draw (A1) -- (A2) -- (A3) -- (A4) -- (A5) -- (A1);

\node [sbvertex] at (0,0.2) {};
\node [sbvertex] at (-0.2,0.2) {};
\node [sbvertex] at (0.2,0.2) {};

\end{tikzpicture}}
  \end{minipage}
  \caption{Case $u_p=a_p$ (left) and Case $u_p=b_p$ (right)}
  \label{fig:case-up}
\end{figure}

 Suppose $\ell_i\ge 3$ for some $i\in [2,t]$. Let $p$ be the minimum integer such that $\ell_{p+1}\ge 3$. Since $\ell_0=1, \ell_1=1$ and $\ell_2\le 2$, we know that $p\ge 2$. Note that $u_p\in V(C_p)=\{x_p, a_p,b_p, c_p, d_p\}$ and $u_p\ne x_p$ by the process. Suppose $u_p=a_p$ (See Figure~\ref{fig:case-up}). Similar to before, every special $5$-cycle in $H_{p+1}$ containing $u_{p+1}$ must contain $u_{p+1}a_p, u_{p+1}d_p, a_pb_p$ as edges. Therefore, $\ell_{p+1}\ge 3$ implies that there are at least three $b_pd_p$-paths of length two in $G$ which give a separating $4$-cycle in $G$, a contradiction. Thus $u_p\ne a_p$. Similarly, we have that $u_p\ne d_p$ and $u_p\in \{b_p, c_p\}$. By symmetry, we may assume $u_p=b_p$ (See Figure~\ref{fig:case-up}). This implies that $C_{p-1}=x_{p-1}c_pd_px_pa_px_{p-1}$ and  we have that $u_{p-1}\in V(C_{p-1})\backslash \{ x_{p-1}, x_p\}=\{a_p, c_p, d_p\}$. Suppose $u_{p-1}=d_p$. By a similar argument, each special $5$-cycle containing $u_{p+1}$ in $H_{p+1}$ contains the edges $u_{p+1}a_p, u_{p+1}d_p, c_pd_p$ and there are at least three $a_pc_p$-paths of length two in $H_{p+1}$ as $\ell_{p+1}\ge 3$. This contradicts that $G$ has no separating $4$-cycle. Thus $u_{p-1}\in \{a_p, c_p\}$. Note that $C_{p+1}$ is the only special $5$-cycle containing $u_{p+1}$ and $u_{p}$, and so the other $\ell_{p+1}-1$ many special $5$-cycles in $H_{p+1}$ containing $u_{p+1}$ are also special $5$-cycles in $G[V(H_{p-1})\cup \{u_{p+1}\}]$. By the choice of $u_{p}$, we know $\ell_p\ge \ell_{p+1}-1\ge 2$ and thus $\ell_p=2$, i.e., $H_p=G[V(H_{p-1})\cup \{u_p\}]$ has exactly two special $5$-cycles containing $u_p$. If $u_{p-1}=c_p$, then for any special $5$-cycle $D$ containing $u_p$ in $H_p$ we have $a_pu_pc_pd_p\subseteq D$. Since $\ell_p=2$, there exist two $a_pd_p$-paths of length two in $H_p$. These two paths together with the path $a_pu_{p+1}d_p$ give a separating $4$-cycle, a contradiction. Thus, $u_{p-1}=a_p$. Similarly, by the fact that $G[V(H_{p-1)}\cup \{u_{p+1}\}]$ has two special $5$-cycles containing $u_{p+1}$, we know that there are two $x_{p-1}d_p$-paths of length two in $H_{p-1}$ (one is $x_{p-1}c_pd_p$ and denote the other by $x_{p-1}v d_p$). It follows that $c_p$ is adjacent to $v$ as $G$ has no separating $4$-cycle. Observe that now $C_p=x_pa_pu_pc_pd_px_p$ is the only special $5$-cycle containing $u_p$ in $H_p$, contradicting that $\ell_{p}=2$. Therefore, we have that $\ell_i\le 2$ for $i\in [2,t]$, and so
 $$\sum_{i=0}^t \ell_i=\ell_0+\ell_1+\sum_{i=2}^t \ell_i=2+ \sum_{i=2}^t \ell_i\le 2+2(t-1)=2t.$$
Thus $H_t$ has at most $2t$ special $5$-cycles for $t\ge 1$.
\end{proof}

Note that there exists no vertex $u\in V(G)\backslash V(H_t)$ such that $u$ is adjacent to three consecutive vertices in $C_t$. We show that for any two non-adjacent vertices $w_1,w_2$ in $C_t$, $w_1$ and $w_2$ cannot have a common neighbor in the exterior of $C_t$. Suppose there exists $u\in V(G)\backslash V(H_t)$ such that $w_1u, w_2u\in E(G)$. Denote the path between $w_1$ and $w_2$ in $C_t$ that has length two by $P$ and assume that $P=w_1yw_2$. Then $\{u, w_1, y, w_2\}$ induces a $4$-cycle $uw_1yw_2u$ in $G$. Since $G$ has no separating $4$-cycle, either $uy$ is an edge of $G$ or $w_1w_2$ is an edge of $G$. Observe that if $w_1w_2\in E(G)$ then $C_t-y+w_1w_2$ is a separating $4$-cycle in $G$, a contradiction. It follows that $u$ is adjacent to $y$ and this implies that there exists $u\in V(G)\backslash V(H_t)$ such that $u$ is adjacent to three consecutive vertices in $C_t$, a contradiction. 

First, we claim that $V(H_t)\neq V(G)$. Suppose for contradiction that $V(H_t)=V(G)$. Then $n=|V(G)|=|V(H_t)|=11+t$ and it follows that $t=n-11\ge 10$. We know that $C_t=u_{t}a_{t-1}b_{t-1}c_{t-1}d_{t-1}u_{t}$ and $C_{t-1}=x_{t-1}a_{t-1}b_{t-1}c_{t-1}d_{t-1}x_{t-1}$ such that $N_{H_t}(u_t)=\{d_{t-1}, x_{t-1}, a_{t-1}\}$.  Since $G$ is $5$-connected and $d_{H_t}(u_t)=3$, $u_t$ should be adjacent to $b_{t-1},c_{t-1}$ and then $G=H_t+\{ u_tb, u_tc\}$. Observe that $u_{t-1}\in \{ a_{t-1}, b_{t-1}, c_{t-1}, d_{t-1}\}$. By the fact that $d_{H_{t-1}}(u_{t-1})=3$, it follows that $d_{H_t}(u_{t-1})=4$ if $u_{t-1}\in \{ a_{t-1}, d_{t-1}\}$; and  $d_{H_t}(u_{t-1})=3$ if  $u_{t-1}\in \{ b_{t-1}, c_{t-1}\}$. Thus $u_{t-1}$ has degree $4$ in $G$ as $G=H_t+\{ u_tb_{t-1}, u_tc_{t-1}\}$, contradicting that $G$ is $5$-connected. Therefore, $V(H_t)\neq V(G)$.

Let $G'$ be obtained from $G$ by contracting the interior of $C_t$, i.e., $H_t-C_t$, into a single vertex $v'$. We claim that $G'$ is $5$-connected. Suppose not. Then $G'$ has a separating triangle or $4$-cycle, say $D$. Since $G$ is $5$-connected, $D$ must use the vertex $v'$ and one vertex $u$ in the exterior of $C_t$. This implies that $D$ should be a $4$-cycle and $D$ contains two vertices $w_1, w_2$ in $C_t$. Note that $w_1,w_2$ are not adjacent in $C_t$, otherwise $\{u, w_1, w_2\}$ induces a separating triangle in $G$. Hence $w_1, w_2$ are not adjacent in $C$. Then $u$ is a vertex in the exterior of $C_t$ such that $u$ is a common neighbor of two non-adjacent vertices in $C_t$, a contradiction. Therefore, $G'$ is $5$-connected and we know that $n'=|V(G')|=|V(G)|-(|V(H_t)|-5)+1=n-(6+t)+1=n-5-t$. Hence $t=n-5-n'$. Observe that $d_{G}(v)\ge d_{G'}(v)\ge 5$ for every $v\in V(C_t)$ since $v$ has at least one neighbor contained in $V(H_t-C_t)$ by (i) of Observation~\ref{obs:int_degree}. Similar to Case $1$, we can show that the number of trivial separating $5$-cycles in $G$ satisfies that
$$N_t(C_5, G)\le N_t(C_5, G')+(n-n').$$ 

Now we consider non-trivial separating $5$-cycles in $G$. Let $D$ be a non-trivial separating $5$-cycle of $G$. Note that $C_t$ is a non-trivial separating $5$-cycle in $G$, a trivial separating $5$-cycle in $G'$, and a special $5$-cycle in $H_t$. If $D$ is contained in $H_t$, it follows that $D$ is 
a special $5$-cycle of $H_t$. By Claim~\ref{cl:cloning}, $H_t$ has at most $2t$ special $5$-cycles if $t\ge 1$. When $t=0$, we know that $H_0=G_{11}$ has exactly one special $5$-cycle which is its outer cycle. We may now assume that $D\nsubseteq H_t$. Then $D\ne C_t$. Suppose $D$ contains no vertex in the interior of $C_t$, i.e., $V(H_t-C_t)$. It follows that $D$ is also a non-trivial separating $5$-cycle of $G'$ contained in $G'-v'$. Assume that $D$ uses vertices both in the interior and the exterior of $C_t$. Then $D$ must use two vertices $w_1, w_2$ in $C_t$. It follows from (i) of Observation~\ref{obs:int_degree} that $D$ has no chord and so $w_1, w_2$ are not adjacent in $C_t$. This implies that $D$ should contain at least two vertices in $\textrm{Ext}_G(C_t)$, otherwise $D$ uses a unique vertex $u\in V(\textrm{Ext}_G(C_t))=V(G)\backslash V(H_t)$ and $w_1, w_2$ are adjacent to $u$, giving a contradiction. Hence $D$ uses two non-adjacent vertices of $C_t$, two vertices in $\textrm{Ext}_G(C_t)$ and one vertex $w_D$ in $\textrm{Int}_G(C_t)$.  Let $P=w_1yw_2$ be the $w_1w_2$-path of length two in $C_t$. Then $w_Dw_1yw_2w_D$ is a $4$-cycle in $H_t$, and $w_D$ is adjacent to $y$ since $H_t$ has no separating triangle or $4$-cycle. Let $D':=D-w_D+\{w_1 v', w_2 v'\}=G'[V(D-w_D)\cup\{v'\}]$. We show that $D'$ is a non-trivial separating $5$-cycle of $G'$. If $y$ is in the interior of $D$, then $\textrm{Int}_{G'}(D')=\textrm{Int}_G(D)$ as $w_Dy\in E(G)$ and $\textrm{Ext}_{G'}(D)$ contains the two vertices in $V(C_t)\backslash V(P)$. Since $D$ is non-trivial in $G$, $\textrm{Int}_G(D)=\textrm{Int}_{G'}(D')$ has at least two vertices. It follows that $D'$ is a non-trivial separating $5$-cycle in $G'$. Similarly, we can show $D'$ is also non-trivial in $G'$ if $y$ is in the exterior of $D$. Let $D_1, D_2$ be two distinct non-trivial separating $5$-cycles of $G$ using vertices in $\textrm{Int}_{G}(C_t)$ and $\textrm{Ext}_{G}(C_t)$, and let $w_{D_i}$ be the unique vertex contained in $\textrm{Int}_{G}(C_t)$ of $D_i$. Let $D_i':=G'[V(D_i-w_{D_i})\cup \{v'\}]$ for each $i\in [2]$. We claim that $D_1'\ne D_2'$. Suppose $D_1'=D_2'$. This implies that $D_1-w_{D_1}=D_2-w_{D_2}$ in $G$. Since $D_1\ne D_2$, we know that $w_{D_1}\ne w_{D_2}$ which is impossible as $G$ is $5$-connected. Thus $D_1'\ne D_2'$. Therefore, the number of non-trivial separating $5$-cycles of $G$ satisfies that 
$N_s(C_5, G)\le 1+ N_s(C_5,G')$ if $t=0$; and $N_s(C_5, G)\le 2t+ N_s(C_5,G')$ if $t\ge 1$. Recall that $n'=n-5-t$ and $t=n-5-n'$. It follows that if $t=0$, then $n'=n-5\ge 16$ and
\begin{align*}
N_t(C_5,G)+N_s(C_5,G)&\le N_t(C_5,G')+N_s(C_5,G')+(n-n')+1\\
&= N_t(C_5,G')+N_s(C_5,G')+n-(n-5)+1\\
&= N_t(C_5,G')+N_s(C_5,G')+6;
\end{align*}
and if $t\ge 1$,
\begin{align*}
N_t(C_5,G)+N_s(C_5,G) &\le N_t(C_5,G')+N_s(C_5,G')+(n-n')+2t\\
&=N_t(C_5,G')+N_s(C_5,G')+(n-n')+2(n-5-n')\\
&=N_t(C_5,G')+N_s(C_5,G')+3n-3n'-10.
\end{align*}

We show that if $n'\geq 20$ then the number of separating $5$-cycles in $G$ is less than $3n-38$. Suppose $n'\geq 20$. By induction, we have $N_t(C_5,G')+N_s(C_5,G')\leq 3n'-38$. Hence when $t=0$, $n'=n-5$ and 
$$N_t(C_5,G)+N_s(C_5,G) \leq (3n'-38) + 6=3(n-5)-38+6=3n-47<3n-38;$$ and when $t\ge 1$,
$$N_t(C_5,G)+N_s(C_5,G) \leq (3n'-38) + 3n-3n'-10= 3n-48<3n-38.$$ 
On the other hand, if $14\leq n' \leq 19$, by checking every value of the maximum separating $5$-cycles of $n'$-vertex $5$-connected planar triangulations in Table \ref{table:small_values} we also see that $G$ has smaller than $3n-38$ separating $5$-cycles. Therefore, if $n'\ge 14$ then $G$ has smaller than $(9n-50)$ many $5$-cycles. We may now assume $n'\le 13$.

Since there are no $5$-connected planar triangulations on $13$ vertices, the only remaining case is when $n'=12$. If $n'=12$, note that $G'-v'$ has exactly $11$ vertices and it is a near triangulation with its outer cycle being a $5$-cycle and no separating triangle or $4$-cycle. It follows that  $G-(H_t-C_t)=G'-v'=G_{11}$ and every vertex in $C_t$ has exactly two neighbors in the exterior of $C_t$. Note that $G'$ has no non-trivial separating $5$-cycle, i.e., $N_s(C_5,G')=0$, as $G'-v'=G_{11}$.
We show that in this case, $G$ has at most $3n-38$ separating $5$-cycles and the equality holds if and only if $G$ is the graph $D_1$ (when $n$ is even) and $D_2$ (when $n$ is odd), see Figure \ref{fig:extremal}. 

 We first claim that (i) there are at least three vertices in $C_t$ that have degree at least four in $H_t$; (ii) moreover, if $t\ge 3$, there exists at least one vertex in $H_t-C_t$ such that its degree is at least six in $G$. We will prove (i) by induction on $t$. Clearly (i) is true for $H_0=G_{11}$, as every vertex in $C_0$ has degree $4$ in $H_0$. Suppose (i) is true for $H_j$ for some $j\geq 0$. Recall that we use $C_i:= x_i a_i b_i c_i d_i x_i$ to denote the outer cycle of $H_i$ for each $i$ with $0\leq i\leq t-1$ such that $N_{H_{i+1}}(u_{i+1})=\{x_i, a_i, d_i\}$. We aim to show that (i) holds for $H_{j+1}$. It is clear that $d_{H_j}(x)\geq 3$ for every $x\in V(C_j)$. Hence $d_{H_{j+1}}(d_j)\geq 4$ and $d_{H_{j+1}}(a_j) \geq 4$. Thus, if $d_{H_{j+1}}(c_j) = d_{H_j}(c_j)\geq 4$ or $d_{H_{j+1}}(b_j) = d_{H_j}(b_j)\geq 4$, then we are done. Otherwise, $d_{H_j}(c_j) = d_{H_j}(b_j)=3$. However, this implies that there exists a vertex $v\in V(\textrm{Int}_G(C_j))$ such that $v$ is adjacent to $a_j, b_j, c_j, d_j$ and the $4$-cycle $u_{j+1}a_jvd_ju_{j+1}$ is separating in $G$, a contradiction. Thus we can conclude by induction that there are at least three vertices in $C_t$ that have degree at least four in $H_t$. For (ii), it suffices to show that there exists at least one vertex in $V(H_3-C_3)$ with degree at least $6$ in $H_3$. Suppose not and then all vertices in $V(H_3-C_3)$ have degree $5$ in $H_3$. This also implies that every vertex in $V(H_2-C_2)$ have degree $5$ in $H_2$. We know that $H_1$ is unique and $d_{H_1}(a_0)=5$ and $d_{H_1}(d_0)=5$. Note that $x_1\in \{a_0, b_0, c_0, d_0\}$ and $x_1\in V(H_2-C_2)$ such that $d_{H_2}(x_1)=d_{H_1}(x_1)+1$. Since $x_1$ has degree $5$ in $H_2$, we obtain that $x_1\in \{b_0, c_0\}$ and may assume $x_1=b_0$. Observe that $c_0, d_0$ have degree $5$ and $a_0$ has degree $6$ in $H_2$. Since $x_2\in \{c_0, d_0, a_0\}$ and $d_{H_3}(x_2)=d_{H_2}(x_2)+1$, $x_2$ must have degree at least $6$ in $H_3$ (with $x_2\in V(H_3-C_3)$) which is a contradiction.

 Recall that $n'=n-5-t$. Since $n'=12$, $t=n-5-n'=n-17\ge 4$. This implies that there are at least four vertices of degree at least six in $G$ (at least three in $V(C_t)$ and at least one in $V(H_t-C_t)$), and so $N_t(C_5,G)\le n-4$. Since $t\ge 4$, $H_t$ has at most $2t$ special $5$-cycles. Thus $N_s(C_5,G)\le 2t+N_s(C_5,G')=2t+0=2t=2(n-5-n')=2(n-17)=2n-34$. Therefore, 
 $$N_t(C_5, G)+N_S(C_5, G)\le (n-4)+(2n-34)=3n-38.$$
 This implies that $G$ has at most $3n-38$ separating $5$-cycles, and the equality holds if and only if exactly three vertices in $C_t$ have degree at least four in $H_t$ (i.e., exactly two vertices in $C_t$ have degree three in $H_t$), exactly one vertex in $H_t-C_t$ having degree at least six, and $H_t$ has exactly $2t$ special $5$-cycles (i.e., $\ell_i=2$ for every $2\leq i\leq t$). We claim that $H_t$ is unique under those assumptions. 
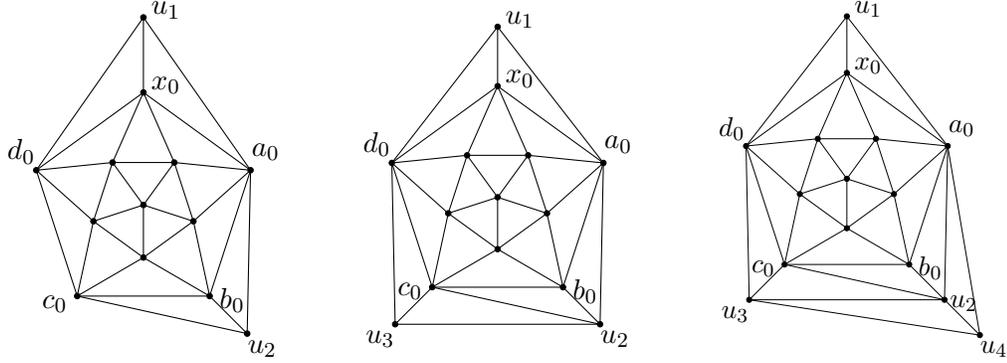
\begin{figure}[htb]
  \centering
  \begin{minipage}{.15\textwidth}
    \centering
     \resizebox{4cm}{!}{\begin{tikzpicture}[scale=1, wvertex/.style={circle, draw=red, fill=red, scale=0.2}, bvertex/.style={circle, draw=black, fill=black, scale=0.2},rvertex/.style={circle, draw=red, fill=red, scale=0.2}, sbvertex/.style={circle, draw=black, fill=black, scale=0.1}] 

\foreach \i in {1,...,5} {
  \node [bvertex] (A\i) at (72*\i+18:1.5) {}; 
}

\node at ($(A1)+ (0.3,0.1)$) {$x_0$};
\node at ($(A2)+ (-0.2,0.25)$) {$d_0$};
\node at ($(A3)+ (-0.3,-0.1)$) {$c_0$};
\node at ($(A4)+ (0.3,-0.05)$) {$b_0$};
\node at ($(A5)+ (0.2,0.25)$) {$a_0$};

  \node [bvertex] (u1) at ($(A1)+(0,1)$) {};
  \node at ($(u1)+(0.3,0.1)$) {$u_1$};

  \draw (u1)--(A1);
  \draw (u1)--(A2);
  \draw (u1)--(A5);

  \node [bvertex] (u2) at ($(A4)+(0.5,-0.5)$) {};
  \node at ($(u2)+(0.2,-0.2)$) {$u_2$};
  \draw (u2)--(A3);
  \draw (u2)--(A5);
  \draw (u2)--(A4);
\draw (A1) -- (A2) -- (A3) -- (A4) -- (A5) -- (A1);

\foreach \i in {1,...,5} {
  \node [bvertex] (B\i) at (72*\i+18+180:0.7) {}; 
}

\draw (B1) -- (B2) -- (B3) -- (B4) -- (B5) -- (B1);

\node [bvertex] (C) at (0,0) {};

\draw (C) -- (B1);
\draw (C) -- (B2);
\draw (C) -- (B3);
\draw (C) -- (B4);
\draw (C) -- (B5);

\draw (A3) -- (B1) --(A4) -- (B2) -- (A5) --(B3) --(A1) -- (B4) --(A2) --(B5) --(A3);

\end{tikzpicture}}
  \end{minipage}
  \hspace{2cm}
  \begin{minipage}{.15\textwidth}
    \centering
   \resizebox{3.95cm}{!}{\begin{tikzpicture}[scale=1, wvertex/.style={circle, draw=red, fill=red, scale=0.2}, bvertex/.style={circle, draw=black, fill=black, scale=0.2},rvertex/.style={circle, draw=red, fill=red, scale=0.2}, sbvertex/.style={circle, draw=black, fill=black, scale=0.1}] 

\foreach \i in {1,...,5} {
  \node [bvertex] (A\i) at (72*\i+18:1.5) {}; 
}

\node at ($(A1)+ (0.3,0.1)$) {$x_0$};
\node at ($(A2)+ (-0.2,0.25)$) {$d_0$};
\node at ($(A3)+ (-0.3,-0.05)$) {$c_0$};
\node at ($(A4)+ (0.3,-0.05)$) {$b_0$};
\node at ($(A5)+ (0.2,0.25)$) {$a_0$};

  \node [bvertex] (u1) at ($(A1)+(0,0.8)$) {};
  \node at ($(u1)+(0.3,0.1)$) {$u_1$};

  \draw (u1)--(A1);
  \draw (u1)--(A2);
  \draw (u1)--(A5);

  \node [bvertex] (u2) at ($(A4)+(0.5,-0.5)$) {};
  \node at ($(u2)+(0.2,-0.2)$) {$u_2$};
  \draw (u2)--(A3);
  \draw (u2)--(A5);
  \draw (u2)--(A4);

\node [bvertex] (u3) at ($(A3)+(-0.5,-0.5)$) {};
  \node at ($(u3)+(-0.2,-0.2)$) {$u_3$};
  \draw (u3)--(A3);
  \draw (u3)--(A2);
  \draw (u3)--(u2);
  
\draw (A1) -- (A2) -- (A3) -- (A4) -- (A5) -- (A1);

\foreach \i in {1,...,5} {
  \node [bvertex] (B\i) at (72*\i+18+180:0.7) {}; 
}

\draw (B1) -- (B2) -- (B3) -- (B4) -- (B5) -- (B1);

\node [bvertex] (C) at (0,0) {};

\draw (C) -- (B1);
\draw (C) -- (B2);
\draw (C) -- (B3);
\draw (C) -- (B4);
\draw (C) -- (B5);

\draw (A3) -- (B1) --(A4) -- (B2) -- (A5) --(B3) --(A1) -- (B4) --(A2) --(B5) --(A3);

\end{tikzpicture}}
  \end{minipage}
  \hspace{2cm}
  \begin{minipage}{.15\textwidth}
    \centering
   \resizebox{4.2cm}{!}{\begin{tikzpicture}[scale=1, wvertex/.style={circle, draw=red, fill=red, scale=0.2}, bvertex/.style={circle, draw=black, fill=black, scale=0.2},rvertex/.style={circle, draw=red, fill=red, scale=0.2}, sbvertex/.style={circle, draw=black, fill=black, scale=0.1}] 

\foreach \i in {1,...,5} {
  \node [bvertex] (A\i) at (72*\i+18:1.5) {}; 
}

\node at ($(A1)+ (0.3,0.1)$) {$x_0$};
\node at ($(A2)+ (-0.2,0.25)$) {$d_0$};
\node at ($(A3)+ (-0.3,-0.05)$) {$c_0$};
\node at ($(A4)+ (0.3,-0.05)$) {$b_0$};
\node at ($(A5)+ (0.2,0.25)$) {$a_0$};

  \node [bvertex] (u1) at ($(A1)+(0,0.8)$) {};
  \node at ($(u1)+(0.3,0.1)$) {$u_1$};

  \draw (u1)--(A1);
  \draw (u1)--(A2);
  \draw (u1)--(A5);

  \node [bvertex] (u2) at ($(A4)+(0.5,-0.5)$) {};
  \node at ($(u2)+(0.27,-0.05)$) {$u_2$};
  \draw (u2)--(A3);
  \draw (u2)--(A5);
  \draw (u2)--(A4);

\node [bvertex] (u3) at ($(A3)+(-0.5,-0.5)$) {};
  \node at ($(u3)+(-0.2,-0.2)$) {$u_3$};
  \draw (u3)--(A3);
  \draw (u3)--(A2);
  \draw (u3)--(u2);

   \node [bvertex] (u4) at ($(u2)+(0.5,-0.5)$) {};
  \node at ($(u4)+(0.2,-0.2)$) {$u_4$};
  \draw (u4)--(A5);
  \draw (u4)--(u2);
  \draw (u4)--(u3); 
  
\draw (A1) -- (A2) -- (A3) -- (A4) -- (A5) -- (A1);

\foreach \i in {1,...,5} {
  \node [bvertex] (B\i) at (72*\i+18+180:0.7) {}; 
}

\draw (B1) -- (B2) -- (B3) -- (B4) -- (B5) -- (B1);

\node [bvertex] (C) at (0,0) {};

\draw (C) -- (B1);
\draw (C) -- (B2);
\draw (C) -- (B3);
\draw (C) -- (B4);
\draw (C) -- (B5);

\draw (A3) -- (B1) --(A4) -- (B2) -- (A5) --(B3) --(A1) -- (B4) --(A2) --(B5) --(A3);

\end{tikzpicture}}
\end{minipage}
  \caption{Graphs $H_2$ (left), $H_3$ (middle), $H_4$ (right).}
  \label{fig:unique_Ht}
\end{figure}
 We know $H_1$ is unique as $H_0$ is symmetric. Since $\ell_2=2$, we have that $x_1\in \{b_0, c_0\}$ and then $H_2$ is also unique by symmetry. Thus we may assume $x_1=b_0$. Next we show $H_3$ is unique. Observe that $x_2\in \{a_0, c_0, d_0\}$. If $x_2=a_0$, then $\ell_3=1$. Hence $x_2\in \{c_0, d_0\}$ and $H_3$ is unique by symmetry. We may assume $x_2=c_0$ and then $c_0$ has degree $6$ in $H_t$. Since $H_t$ has exactly one vertex of degree $6$ contained in $V(H_t-C_t)$, we then obtain that $x_3=u_2$ and moreover $x_i=u_{i-1}$ for each $i\in \{3, \ldots, t-1\}$ if $t\ge 4$. This implies that $H_t$ is unique. Since $G-(H_t-C_t)=G_{11}$ is symmetric, it follows that $D_1$ (when $n$ is even) or $D_2$ (when $n$ is odd) is the unique extremal graph satisfying the constraints above.
 This completes the proof of Theorem \ref{thm:5-cycle-5-connected}.
 \end{proof}

\section{Number of $k$-cycles in $5$-connected planar triangulations}\label{sec:k-cycle}

In this section, we prove Theorem \ref{thm:stronger}, which implies Theorem \ref{thm:long-cycles}. We use the following lemma about planar graphs which was proved (but never published) by Barnette~\cite{Barnette} and independently by Borodin~\cite{Borodin1989} (see \cite{JV2013} for more history).

\begin{lemma}\label{lem:light-edges}\cite{Barnette, Borodin1989}
Let $G$ be a planar graph with minimum degree at least $3$. Then $G$ contains two adjacent vertices $u$ and $v$ such that $d(u)+d(v)\leq 13$.
\end{lemma}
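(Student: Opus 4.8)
I would prove this classical (Lebesgue/Kotzig/Borodin-type) statement by Euler's formula together with discharging. Suppose, for contradiction, that $G$ is planar with $\delta(G)\ge 3$ but every edge $uv$ satisfies $d(u)+d(v)\ge 14$. Passing to a component and then, by a standard reduction handling cut vertices, to a $2$-connected plane graph, I may assume every face is bounded by a cycle of length at least $3$. The key consequence of the hypothesis is structural: if $d(v)\le 6$ then every neighbour of $v$ has degree at least $14-d(v)\ge 8$; in particular the vertices of degree at most $6$ form an independent set, and every vertex of degree $3,4,5$ is entirely surrounded by vertices of degree at least $8$.

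Next I would assign to each vertex $v$ the charge $d(v)-6$ and to each face $f$ the charge $2\deg(f)-6$. Using Euler's formula $n-m+F=2$ together with $\sum_v d(v)=2m=\sum_f \deg(f)$, the total charge is $(2m-6n)+(4m-6F)=-6(n-m+F)=-12<0$. Since every face has $\deg(f)\ge 3$, every face has nonnegative charge, so the entire deficit is carried by the vertices of degree $3,4,5$. I would then redistribute charge by a discharging rule in which vertices of high degree (and faces of size at least $4$) send charge to their low-degree neighbours, the amounts tuned so that a vertex of degree $d\in\{3,4,5\}$, all of whose neighbours have degree at least $8$, receives exactly $6-d$ and so finishes with charge $0$ (a vertex of degree $6$ or $7$ keeps a nonnegative charge automatically, the latter because a degree-$7$ vertex has no neighbour of degree at most $6$). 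If no vertex or face finishes with negative charge, then the total being $0$ contradicts its being $-12$, and the lemma follows.

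The step I expect to be the real obstacle is verifying that a vertex $w$ of large degree $D$ never sends out more than $D-6$, even though it may be adjacent to many low-degree vertices: this forces one to examine the cyclic sequence of faces and neighbours around $w$ — a low-degree neighbour flanked by two triangular faces forces an edge between two other neighbours of $w$, so such neighbours cannot be packed too tightly, while each non-triangular face at $w$ supplies extra positive charge — and it typically requires a second discharging phase or a separate treatment of degrees $7,8,9$. Making everything balance at exactly the threshold $13$ (which is best possible) is the entire content of the argument; the reduction to $2$-connected graphs is only a minor preliminary. Since the statement is classical, the alternative — the one the paper takes — is simply to cite it.
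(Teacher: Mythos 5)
The paper does not prove this lemma at all: it is quoted as a known result of Barnette (unpublished, via Gr\"unbaum's survey) and Borodin, and the only ``proof'' in the paper is the citation --- which you correctly identify in your last sentence as a legitimate route. So the comparison to make is between your discharging sketch and a complete proof, and there your proposal has a genuine gap. The setup is fine: assuming every edge has degree sum at least $14$, the vertices of degree at most $6$ are independent, a vertex of degree $d\in\{3,4,5\}$ has all neighbours of degree at least $14-d$, and with charges $d(v)-6$ on vertices and $2\deg(f)-6$ on faces the total is $-12$ with only degree-$3,4,5$ vertices negative. But the proof consists precisely of what you then leave unspecified: you never state the discharging rules (``the amounts tuned so that\dots'') and never carry out the verification that a vertex of large degree $D$ sends out at most $D-6$, and you yourself flag this as ``the real obstacle.'' Since the bound $13$ is sharp, there is no slack to absorb a rough estimate; e.g.\ under the natural rule where each low vertex draws $(6-d)/d$ from each neighbour, a degree-$11$ vertex adjacent to many degree-$3$ vertices would be overdrawn unless one exploits the plane structure around it (non-triangular faces contributing charge, low-degree neighbours not packable consecutively), which is exactly the case analysis Borodin's argument performs. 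As written, then, this is a correct plan plus an admission that the decisive step is missing, not a proof.

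Two smaller points: the reduction to $2$-connected graphs is unnecessary (for any simple connected plane graph on at least three vertices one has $\sum_f \deg(f)=2m$ and $\deg(f)\ge 3$, which is all the face charges need), and a degree-$7$ vertex indeed needs no attention since it has no neighbour of degree at most $6$. If you intend the lemma to enter the paper the way the authors use it, citing \cite{Barnette, Borodin1989} is the appropriate and sufficient course; if you intend to give a self-contained proof, the discharging rules and the per-vertex (and per-face) verification must be written out in full.
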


Based on Lemma \ref{lem:light-edges}, we show the following lemma.

\begin{lemma}\label{lem:light-edges-min-degree-2}
Let $G$ be a planar graph with minimum degree at least $2$ such that every two distinct vertices have at most two common neighbors. Then $G$ contains two adjacent vertices $u$ and $v$ such that $d(u)+d(v)\leq 39$.
\end{lemma}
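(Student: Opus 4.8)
The idea is to reduce Lemma~\ref{lem:light-edges-min-degree-2} to Lemma~\ref{lem:light-edges} by deleting the vertices of degree $2$. Let $G$ be as in the statement, and let $S$ be the set of vertices of degree exactly $2$ in $G$. If $S=\emptyset$ then $G$ already has minimum degree at least $3$ and Lemma~\ref{lem:light-edges} gives an edge $uv$ with $d(u)+d(v)\le 13\le 39$, so assume $S\neq\emptyset$. The first key observation is that $S$ is an independent set: if $s,s'\in S$ were adjacent, then since $d(s)=d(s')=2$, the component containing them would be a single edge or a small configuration — more carefully, two adjacent degree-$2$ vertices force the rest of the graph to interact with them in a very limited way; I would rule this out (or handle it as a trivial case where the conclusion is immediate) by noting that such vertices quickly lead to two vertices with three common neighbors or to a tiny graph. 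So we may treat $S$ as independent.

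**Key step: bounded codegree controls how $G-S$ changes.** Form $G' := G - S$. Each $s\in S$ has two neighbors $a_s, b_s \in V(G')$, and I want to add the edge $a_sb_s$ to $G'$ (suppressing $s$) to keep degrees from dropping. The crucial point is the codegree-$2$ hypothesis: for a fixed pair $\{a,b\}$ of vertices, at most two vertices of $S$ have $\{a,b\}$ as their neighbor set, so after suppressing all of $S$ we add a multiset of edges in which each pair appears with multiplicity at most $2$; collapsing to a simple graph $G''$ on $V(G')$, each vertex $v\in V(G')$ loses at most its $S$-neighbors but the added edges partly compensate. Here is the accounting I would do: for $v\in V(G')$, $d_{G''}(v) \ge d_G(v) - |N_G(v)\cap S| + (\text{number of new neighbors of }v)$; and each $s\in N_G(v)\cap S$ contributes a new neighbor (the other endpoint $a_s$ or $b_s$) unless that vertex was already adjacent to $v$ or coincides with a previously counted one. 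The codegree bound caps the "collisions": the new neighbors of $v$ coming through $S$ form a set, and a given vertex $w$ can be "reached" from $v$ through at most two vertices of $S$ (those $s$ with $N_G(s)=\{v,w\}$). So roughly $d_{G''}(v) \ge d_G(v) - \tfrac{1}{2}|N_G(v)\cap S|$ or better — in any case $d_{G''}(v)\ge 3$, since $d_G(v)\ge 3$ for $v\in V(G')$ and we never destroy more than we're allowed to. I need to be a little careful that $G''$ is still planar (suppressing degree-$2$ vertices preserves planarity; parallel edges are then merged) and still satisfies a codegree bound — actually I only need planarity and minimum degree $\ge 3$ for the next step.

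**Applying Lemma~\ref{lem:light-edges} and pulling back.** Since $G''$ is planar with $\delta(G'')\ge 3$, Lemma~\ref{lem:light-edges} yields adjacent $u,v\in V(G'')$ with $d_{G''}(u)+d_{G''}(v)\le 13$. Now I relate degrees back to $G$. If $uv\in E(G)$ (i.e. the edge was not newly created), then $d_G(u)\le d_{G''}(u) + |N_G(u)\cap S|$ and similarly for $v$; I must bound $|N_G(u)\cap S|$ in terms of $d_{G''}(u)$. This is exactly where the codegree hypothesis pays off again: the vertices of $S$ adjacent to $u$ are partitioned by their \emph{other} neighbor, and at most two of them share the same other neighbor, so $|N_G(u)\cap S| \le 2\,|N_{G''}(u)\setminus\{$old neighbors$\}| \le 2\,d_{G''}(u)$, giving $d_G(u)\le 3\,d_{G''}(u)$ and likewise $d_G(v)\le 3\,d_{G''}(v)$, hence $d_G(u)+d_G(v)\le 3\cdot 13 = 39$. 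If instead $uv$ was a newly created edge, then $uv$ arose from some $s\in S$ with $N_G(s)=\{u,v\}$, so $us, sv\in E(G)$ and $d_G(s)=2$; then the edge $us$ in $G$ satisfies $d_G(u)+d_G(s) = d_G(u)+2$, and I bound $d_G(u)\le 3\,d_{G''}(u)\le 39$ as before, so $d_G(u)+d_G(s)\le 41$ — hmm, this overshoots $39$, so I would instead argue that in this case we may simply take the pair $\{u,v\}$ only when the edge is genuine, or sharpen the constant bookkeeping so that the "$+2$" is absorbed (e.g. Lemma~\ref{lem:light-edges} actually gives $\le 13$ which leaves slack, and $d_G(u)\le d_{G''}(u)+2(d_{G''}(u)-1)$ since one new neighbor of $u$ is $v$ itself, yielding $d_G(u)\le 3d_{G''}(u)-2$, so $d_G(u)+2 \le 3d_{G''}(u) \le 39$).

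**Main obstacle.** The delicate part is the degree bookkeeping when suppressing $S$: making sure that (a) every vertex of $V(G')$ genuinely retains degree $\ge 3$ in the simple graph $G''$, and (b) the inequality $d_G(v)\le 3\,d_{G''}(v)$ (or a variant with the right constant) holds uniformly, including edge cases where a new edge coincides with an old one or where several degree-$2$ vertices pile onto the same pair. The codegree-$\le 2$ hypothesis is exactly the lever that makes all of these finite, but getting the constant to come out as $39$ (rather than something larger) requires choosing the reduction and the inequalities so that the factor is exactly $3$ and Lemma~\ref{lem:light-edges}'s bound of $13$ is used as stated; I expect the write-up will spend most of its length on this case analysis.
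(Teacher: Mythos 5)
Your overall route is the same as the paper's: suppress the degree-$2$ vertices, merge parallel edges (the codegree-$\le 2$ hypothesis caps multiplicities, giving $d_G(x)\le 3d_{G''}(x)$), apply Lemma~\ref{lem:light-edges} to the reduced graph, and pull the light edge back, treating separately the case where that edge came from a suppressed vertex. However, there is a genuine gap at exactly the step you flag as the main obstacle: the claim that $d_{G''}(v)\ge 3$ holds ``in any case'' is false. Take two adjacent vertices $v,w$ that also have two common degree-$2$ neighbors $s_1,s_2$ (this is permitted by the codegree hypothesis); suppressing $s_1,s_2$ only creates copies of the existing edge $vw$, so one can have $d_G(v)=3$ but $d_{G''}(v)=1$. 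Thus, as written, you cannot apply Lemma~\ref{lem:light-edges} to $G''$. The paper closes this hole by first assuming, without loss of generality, that every neighbor of a degree-$2$ vertex of $G$ has degree at least $38$ (otherwise an adjacent pair with degree sum at most $39$ exists trivially); then any $u$ with $d_{G''}(u)\le 2$ satisfies $d_{G''}(u)<d_G(u)$, hence has a degree-$2$ neighbor, hence $d_G(u)\ge 38$, contradicting $d_G(u)\le 3d_{G''}(u)\le 6$. Your proposal contains no such reduction (your trivial case only covers two adjacent degree-$2$ vertices), and ``we never destroy more than we're allowed to'' does not substitute for it; alternatively you would need to treat $d_{G''}(v)\le 2$ as its own trivial case via $d_G(v)\le 3d_{G''}(v)\le 6$.

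A secondary slip occurs in the case where the light edge $uv$ of $G''$ is newly created: your sharpened bound $d_G(u)\le 3d_{G''}(u)-2$ is off by one, since $u$ and $v$ may have two common degree-$2$ neighbors, so the $G''$-neighbor $v$ can account for $2$ edges at $u$ in $G$; the correct bound is $d_G(u)\le 3d_{G''}(u)-1$, and your concluding inequality $d_G(u)+2\le 3d_{G''}(u)\le 39$ does not follow as written (it yields $40$). The paper's cleaner fix is to note that $d_G(u)+d_G(v)\le 39$ forces, say, $d_G(u)\le 19$, and then $u$ together with the suppressed degree-$2$ vertex $s$ is an adjacent pair with $d_G(u)+d_G(s)\le 21$. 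Both issues are repairable, but the argument as proposed does not go through.
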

\begin{proof}
We may assume that $G$ has minimum degree exactly $2$, otherwise the lemma holds by Lemma \ref{lem:light-edges}. Moreover, we may assume that every neighbor of a degree two vertex in $G$ has degree at least $38$, as otherwise the lemma holds trivially. 

Let $G'$ be the simple graph obtained from $G$ by suppressing all the degree-$2$ vertices of $G$ and removing the multi-edges. 
Observe that since every two distinct vertices in $G$ have at most two common neighbors, and there are no two adjacent degree-$2$ vertices in $G$, we obtain that every two vertices in $G$ have at most two paths between them with all the internal vertices being degree two. Hence after suppressing all the degree-$2$ vertices in $G$, every two vertices have at most three multi-edges between them (possibly including the edge between the two vertices if they are adjacent).
It follows that $d_{G'}(x)\geq d_{G}(x)/3$, i.e., $d_G(x)\leq 3 d_{G'}(x)$ for every $x\in V(G')$. Moreover, observe that if $x\in V(G')$ has no neighbor of degree two in $G$ then $d_{G'}(x)=d_G(x)$. This implies that if $x\in V(G')$ satisfies that $d_{G'}(x)<d_G(x)$, then $x$ has a neighbor of degree two in $G$.

Suppose first that $\delta(G')\leq 2$ and let $u$ be a vertex in $G'$ with degree at most $2$. Then $d_{G'}(u)\leq 2<3\leq d_G(u)$ and hence there exists a vertex $w\in N_G(u)$ such that $d_G(w)=2$. This implies that $u$ should have degree at least $38$ in $G$. However, $d_{G}(u)\le 3d_{G'}(u)\le 6$, giving a contradiction. Hence we can assume $\delta(G')\geq 3$. Now by Lemma \ref{lem:light-edges}, there exist $uv\in E(G')$ such that $d_{G'}(u)+d_{G'}(v)\leq 13$. Thus we have
$$d_G(u)+d_G(v)\leq 3(d_{G'}(u)+d_{G'}(v))\leq 39.$$
If $uv\in V(G)$, then we are done. Otherwise, there exists a degree two vertex $w\in V(G)$ such that $ N_G(w)=\{u,v\}$. Since $d_G(u)+d_G(v)\leq 39$, we have that $\min\{d_G(u),d_G(v)\}\leq 19$. Without loss of generality, suppose $d_G(u)\leq 19$. Then it follows that 
$$d_G(u)+d_G(w)\leq 19+2=21,$$
and we are done.
\end{proof}

Now we are ready to prove Theorem \ref{thm:stronger}, which we restate here for convenience.

\stronger*

\begin{proof}

Let $G$ be an $m$-edge planar graph such that every two distinct vertices have at most two common neighbors. Given an integer $k$ with $k\ge 6$, we need to show that $G$ has at most $ C \cdot m^{\floor{k/3}}$ many $k$-cycles, where $C:=342 \cdot \frac{4^{\floor{k/3}}}{\floor{k/3}!}$.

Let $p:=\floor{k/3}$. We first consider the case when $k\equiv 0 \pmod{3}$.  Then $k=3p$ and $p\geq 2$. We  enumerate all the $k$-cycles $C$ in $G$. Without loss of generality, label the vertices of $C$ by $C=v_0 v_1 v_2\cdots v_{3p-1} v_0$. Consider the possible choices of edges in $G$ for the edges of $C$ in $\{v_0v_1, v_3v_4,\cdots, v_{3p-3}v_{3p-2}\}$. There are at most $\binom{m}{p} \cdot 2 ^p \leq \frac{(2m)^p}{p!}$ choices for those edges in $C$. Note that for a fixed set of edges $\{v_0v_1, v_3v_4,\cdots, v_{3p-3}v_{3p-2}\}$, there are at most two choices for each of the vertices in $\{v_2, v_5, \cdots, v_{3p-1}\}$, since every two distinct vertices in $G$ have at most two common neighbors. It follows that the number of $k$-cycles in $G$ is at most 
$$\frac{(2m)^p}{p!} \cdot 2^p = \frac{4^p}{p!} m^{p}\leq C m^{\floor{k/3}}.$$

Now we show Theorem \ref{thm:stronger} when $k\equiv 2 \pmod 3$ or $k\equiv 1 \pmod 3$. We only show the case when $k\equiv 2 \pmod 3$ here, as the reasoning for the case $k\equiv 1 \pmod 3$ is near identical. Then $k=3p+2$ and $p\geq 2$. We apply induction on the number of edges in $G$. The cases when $m< k$ are clearly true. Thus we can assume that $m\geq k = 3p+2$. Suppose first that $G$ has a vertex $u$ of degree $1$, and suppose that $v$ is the unique neighbor of $u$. Note that the edge $uv$ does not lie in any cycle. Hence by induction,
$$N(C_k,G) = N(C_k, G-uv)\leq C(m-1)^{\floor{k/3}} \leq Cm^{\floor{k/3}}.$$
Hence we may assume that $G$ has minimum degree at least $2$. By Lemma \ref{lem:light-edges-min-degree-2}, there exists an edge $uv\in E(G)$ such that $d(u)+d(v)\leq 39$. We will enumerate all the $k$-cycles $C$ in $G$. Similar to before, label the vertices of $C$ by $C=v_0 v_1 v_2\cdots v_{3p+1}v_0$.
Let $S_1$ and $S_2$ denote the set of $k$-cycles in $G$ containing and not containing the edge $uv$, respectively. We first bound $|S_2|$. Observe that $|S_2| = N(C_k, G-uv)$. It follows from induction that
$$|S_2|=N(C_k,G-uv)\leq C(m-1)^{\floor{k/3}} = C(m-1)^p.$$
Now we bound $|S_1|$. Without loss of generality, suppose that $v_0v_1$ is the edge $uv$. Since
$d(u)+d(v)\leq 39$, the number of choices for $(v_2, v_{3p+1})$ is at most 
\begin{align*}
    (d(u)-1)(d(v)-1) & =d(u)d(v)-(d(u)+d(v))+1 \\ &\le \frac{(d(u)+d(v))^2}{4}-(d(u)+d(v))+1 \\ &\le \frac{39^2}{4}-39+1=342.25.
\end{align*}
Next, similar to before, consider the possible choices of edges in $G$   for the positions (edges) $$v_4v_5, v_7v_8,\cdots, v_{3p-2}v_{3p-1}$$ in $C$. There are at most $\binom{m}{p-1} \cdot 2^{p-1}\leq \frac{(2m)^{p-1}}{(p-1)!}$ choices for those edges. Note that for fixed vertices $v_0, v_1, v_2, v_{3p+1}$ and a fixed set of edges $\{v_4v_5, v_7v_8,\cdots, v_{3p-2}v_{3p-1}\}$, there are at most two choices for each of the vertices in $\{v_3, v_6, \cdots, v_{3p}\}$, as every two distinct vertices in $G$ have at most two common neighbors. Thus, it follows that 
$$|S_1|\leq 342\cdot \frac{(2m)^{p-1}}{(p-1)!} \cdot 2^p\leq 171p \cdot \frac{4^p}{p!} m^{p-1}.$$
Therefore, we have that 
\begin{align*}
    N(C_k,G) & = |S_1|+|S_2| \\
             &\leq C(m-1)^p + 171p \cdot \frac{4^p}{p!} m^{p-1}\\
             &\leq Cm^p,
\end{align*}
as $C =342\cdot  \frac{4^p}{p!}$, and the last inequality follows from a routine calculation using the Binomial Theorem. This completes the proof of Theorem \ref{thm:stronger}.
\end{proof}

\bigskip

\noindent\textbf{Acknowledgment.} We would like to thank \'Eva Czabarka and L\'aszl\'o Sz\'ekely for early discussions on $5$-connected planar graphs with large number of Hamiltonian cycles.

\end{document}